\begin{document}
\theoremstyle{plain}
\newtheorem{thm}{Theorem}[section]
\newtheorem*{thm1}{Theorem 1}
\newtheorem*{thm2}{Theorem 2}
\newtheorem{lemma}[thm]{Lemma}
\newtheorem{lem}[thm]{Lemma}
\newtheorem{cor}[thm]{Corollary}
\newtheorem{prop}[thm]{Proposition}
\newtheorem{propose}[thm]{Proposition}
\newtheorem{variant}[thm]{Variant}
\theoremstyle{definition}
\newtheorem{notations}[thm]{Notations}
\newtheorem{rem}[thm]{Remark}
\newtheorem{rmk}[thm]{Remark}
\newtheorem{rmks}[thm]{Remarks}
\newtheorem{defn}[thm]{Definition}
\newtheorem{ex}[thm]{Example}
\newtheorem{claim}[thm]{Claim}
\newtheorem{ass}[thm]{Assumption}
\numberwithin{equation}{section}
\newcounter{elno}                
\def\points{\list
{\hss\llap{\upshape{(\roman{elno})}}}{\usecounter{elno}}} 
\let\endpoints=\endlist


\catcode`\@=11
%
%
\def\opn#1#2{\def#1{\mathop{\kern0pt\fam0#2}\nolimits}} 
\def\bold#1{{\bf #1}}%
\def\underrightarrow{\mathpalette\underrightarrow@}
\def\underrightarrow@#1#2{\vtop{\ialign{$##$\cr
 \hfil#1#2\hfil\cr\noalign{\nointerlineskip}%
 #1{-}\mkern-6mu\cleaders\hbox{$#1\mkern-2mu{-}\mkern-2mu$}\hfill
 \mkern-6mu{\to}\cr}}}
\let\underarrow\underrightarrow
\def\underleftarrow{\mathpalette\underleftarrow@}
\def\underleftarrow@#1#2{\vtop{\ialign{$##$\cr
 \hfil#1#2\hfil\cr\noalign{\nointerlineskip}#1{\leftarrow}\mkern-6mu
 \cleaders\hbox{$#1\mkern-2mu{-}\mkern-2mu$}\hfill
 \mkern-6mu{-}\cr}}}
%
%

%
\def\:{\colon}
\let\oldtilde=\tilde
\def\tilde#1{\mathchoice{\widetilde{#1}}{\widetilde{#1}}%
{\indextil{#1}}{\oldtilde{#1}}}
\def\indextil#1{\lower2pt\hbox{$\textstyle{\oldtilde{\raise2pt%
\hbox{$\scriptstyle{#1}$}}}$}}
\def\pnt{{\raise1.1pt\hbox{$\textstyle.$}}}
%

%
\let\amp@rs@nd@\relax
\newdimen\ex@\ex@.2326ex
\newdimen\bigaw@l
\newdimen\minaw@
\minaw@16.08739\ex@
\newdimen\minCDaw@
\minCDaw@2.5pc
\newif\ifCD@
\def\minCDarrowwidth#1{\minCDaw@#1}
\newenvironment{CD}{\@CD}{\@endCD}
\def\@CD{\def\A##1A##2A{\llap{$\vcenter{\hbox
 {$\scriptstyle##1$}}$}\Big\uparrow\rlap{$\vcenter{\hbox{%
$\scriptstyle##2$}}$}&&}%
\def\V##1V##2V{\llap{$\vcenter{\hbox
 {$\scriptstyle##1$}}$}\Big\downarrow\rlap{$\vcenter{\hbox{%
$\scriptstyle##2$}}$}&&}%
\def\={&\hskip.5em\mathrel
 {\vbox{\hrule width\minCDaw@\vskip3\ex@\hrule width
 \minCDaw@}}\hskip.5em&}%
\def\verteq{\Big\Vert&&}%
\def\noarr{&&}%
\def\vspace##1{\noalign{\vskip##1\relax}}\relax\let\amp@rs@nd@&\iffalse}\fi
 \CD@true\vcenter\bgroup\relax\let\\=\cr\iffalse}\fi\tabskip\z@skip\baselineskip20\ex@
 \lineskip3\ex@\lineskiplimit3\ex@\halign\bgroup
 &\hfill$\m@th##$\hfill\cr}
\def\@endCD{\cr\egroup\egroup}
%
\def\>#1>#2>{\amp@rs@nd@\setbox\z@\hbox{$\scriptstyle
 \;{#1}\;\;$}\setbox\@ne\hbox{$\scriptstyle\;{#2}\;\;$}\setbox\tw@
 \hbox{$#2$}\ifCD@
 \global\bigaw@\minCDaw@\else\global\bigaw@\minaw@\fi
 \ifdim\wd\z@>\bigaw@\global\bigaw@\wd\z@\fi
 \ifdim\wd\@ne>\bigaw@\global\bigaw@\wd\@ne\fi
 \ifCD@\hskip.5em\fi
 \ifdim\wd\tw@>\z@
 \mathrel{\mathop{\hbox to\bigaw@{\rightarrowfill}}\limits^{#1}_{#2}}\else
 \mathrel{\mathop{\hbox to\bigaw@{\rightarrowfill}}\limits^{#1}}\fi
 \ifCD@\hskip.5em\fi\amp@rs@nd@}
\def\<#1<#2<{\amp@rs@nd@\setbox\z@\hbox{$\scriptstyle
 \;\;{#1}\;$}\setbox\@ne\hbox{$\scriptstyle\;\;{#2}\;$}\setbox\tw@
 \hbox{$#2$}\ifCD@
 \global\bigaw@\minCDaw@\else\global\bigaw@\minaw@\fi
 \ifdim\wd\z@>\bigaw@\global\bigaw@\wd\z@\fi
 \ifdim\wd\@ne>\bigaw@\global\bigaw@\wd\@ne\fi
 \ifCD@\hskip.5em\fi
 \ifdim\wd\tw@>\z@
 \mathrel{\mathop{\hbox to\bigaw@{\leftarrowfill}}\limits^{#1}_{#2}}\else
 \mathrel{\mathop{\hbox to\bigaw@{\leftarrowfill}}\limits^{#1}}\fi
 \ifCD@\hskip.5em\fi\amp@rs@nd@}
%
%
\newenvironment{CDS}{\@CDS}{\@endCDS}
\def\@CDS{\def\A##1A##2A{\llap{$\vcenter{\hbox
 {$\scriptstyle##1$}}$}\Big\uparrow\rlap{$\vcenter{\hbox{%
$\scriptstyle##2$}}$}&}%
\def\V##1V##2V{\llap{$\vcenter{\hbox
 {$\scriptstyle##1$}}$}\Big\downarrow\rlap{$\vcenter{\hbox{%
$\scriptstyle##2$}}$}&}%
\def\={&\hskip.5em\mathrel
 {\vbox{\hrule width\minCDaw@\vskip3\ex@\hrule width
 \minCDaw@}}\hskip.5em&}
\def\verteq{\Big\Vert&}
\def\novarr{&}
\def\noharr{&&}
\def\SE##1E##2E{\slantedarrow(0,18)(4,-3){##1}{##2}&}
\def\SW##1W##2W{\slantedarrow(24,18)(-4,-3){##1}{##2}&}
\def\NE##1E##2E{\slantedarrow(0,0)(4,3){##1}{##2}&}
\def\NW##1W##2W{\slantedarrow(24,0)(-4,3){##1}{##2}&}
\def\slantedarrow(##1)(##2)##3##4{%
\thinlines\unitlength1pt\lower 6.5pt\hbox{\begin{picture}(24,18)%
\put(##1){\vector(##2){24}}%
\put(0,8){$\scriptstyle##3$}%
\put(20,8){$\scriptstyle##4$}%
\end{picture}}}
\def\vspace##1{\noalign{\vskip##1\relax}}\relax\let\amp@rs@nd@&\iffalse}\fi
 \CD@true\vcenter\bgroup\relax\let\\=\cr\iffalse}\fi\tabskip\z@skip\baselineskip20\ex@
 \lineskip3\ex@\lineskiplimit3\ex@\halign\bgroup
 &\hfill$\m@th##$\hfill\cr}
\def\@endCDS{\cr\egroup\egroup}
%
\newdimen\TriCDarrw@
\newif\ifTriV@
\newenvironment{TriCDV}{\@TriCDV}{\@endTriCD}
\newenvironment{TriCDA}{\@TriCDA}{\@endTriCD}
\def\@TriCDV{\TriV@true\def\TriCDpos@{6}\@TriCD}
\def\@TriCDA{\TriV@false\def\TriCDpos@{10}\@TriCD}
\def\@TriCD#1#2#3#4#5#6{%
\setbox0\hbox{$\ifTriV@#6\else#1\fi$}
\TriCDarrw@=\wd0 \advance\TriCDarrw@ 24pt
\advance\TriCDarrw@ -1em
\def\SE##1E##2E{\slantedarrow(0,18)(2,-3){##1}{##2}&}
\def\SW##1W##2W{\slantedarrow(12,18)(-2,-3){##1}{##2}&}
\def\NE##1E##2E{\slantedarrow(0,0)(2,3){##1}{##2}&}
\def\NW##1W##2W{\slantedarrow(12,0)(-2,3){##1}{##2}&}
\def\slantedarrow(##1)(##2)##3##4{\thinlines\unitlength1pt
\lower 6.5pt\hbox{\begin{picture}(12,18)%
\put(##1){\vector(##2){12}}%
\put(-4,\TriCDpos@){$\scriptstyle##3$}%
\put(12,\TriCDpos@){$\scriptstyle##4$}%
\end{picture}}}
\def\={\mathrel {\vbox{\hrule
   width\TriCDarrw@\vskip3\ex@\hrule width
   \TriCDarrw@}}}
\def\>##1>>{\setbox\z@\hbox{$\scriptstyle
 \;{##1}\;\;$}\global\bigaw@\TriCDarrw@
 \ifdim\wd\z@>\bigaw@\global\bigaw@\wd\z@\fi
 \hskip.5em
 \mathrel{\mathop{\hbox to \TriCDarrw@
{\rightarrowfill}}\limits^{##1}}
 \hskip.5em}
\def\<##1<<{\setbox\z@\hbox{$\scriptstyle
 \;{##1}\;\;$}\global\bigaw@\TriCDarrw@
 \ifdim\wd\z@>\bigaw@\global\bigaw@\wd\z@\fi
 \mathrel{\mathop{\hbox to\bigaw@{\leftarrowfill}}\limits^{##1}}
 }
 \CD@true\vcenter\bgroup\relax\let\\=\cr\iffalse}\fi
 \tabskip\z@skip\baselineskip20\ex@
 \lineskip3\ex@\lineskiplimit3\ex@
 \ifTriV@
 \halign\bgroup
 &\hfill$\m@th##$\hfill\cr
#1&\multispan3\hfill$#2$\hfill&#3\\
&#4&#5\\
&&#6\cr\egroup%
\else
 \halign\bgroup
 &\hfill$\m@th##$\hfill\cr
&&#1\\%
&#2&#3\\
#4&\multispan3\hfill$#5$\hfill&#6\cr\egroup
\fi}
\def\@endTriCD{\egroup} 
\newcommand{\mc}{\mathcal} 
\newcommand{\mb}{\mathbb} 
\newcommand{\surj}{\twoheadrightarrow} 
\newcommand{\inj}{\hookrightarrow} \newcommand{\zar}{{\rm zar}} 
\newcommand{\an}{{\rm an}} \newcommand{\red}{{\rm red}} 
\newcommand{\Rank}{{\rm rk}} \newcommand{\codim}{{\rm codim}} 
\newcommand{\rank}{{\rm rank}} \newcommand{\Ker}{{\rm Ker \ }} 
\newcommand{\Pic}{{\rm Pic}} \newcommand{\Div}{{\rm Div}} 
\newcommand{\Hom}{{\rm Hom}} \newcommand{\im}{{\rm im}} 
\newcommand{\Spec}{{\rm Spec \,}} \newcommand{\Sing}{{\rm Sing}} 
\newcommand{\sing}{{\rm sing}} \newcommand{\reg}{{\rm reg}} 
\newcommand{\Char}{{\rm char}} \newcommand{\Tr}{{\rm Tr}} 
\newcommand{\Gal}{{\rm Gal}} \newcommand{\Min}{{\rm Min \ }} 
\newcommand{\Max}{{\rm Max \ }} \newcommand{\Alb}{{\rm Alb}\,} 
\newcommand{\GL}{{\rm GL}\,} 
\newcommand{\ie}{{\it i.e.\/},\ } \newcommand{\niso}{\not\cong} 
\newcommand{\nin}{\not\in} 
\newcommand{\soplus}[1]{\stackrel{#1}{\oplus}} 
\newcommand{\by}[1]{\stackrel{#1}{\rightarrow}} 
\newcommand{\longby}[1]{\stackrel{#1}{\longrightarrow}} 
\newcommand{\vlongby}[1]{\stackrel{#1}{\mbox{\large{$\longrightarrow$}}}} 
\newcommand{\ldownarrow}{\mbox{\Large{\Large{$\downarrow$}}}} 
\newcommand{\lsearrow}{\mbox{\Large{$\searrow$}}} 
\renewcommand{\d}{\stackrel{\mbox{\scriptsize{$\bullet$}}}{}} 
\newcommand{\dlog}{{\rm dlog}\,} 
\newcommand{\longto}{\longrightarrow} 
\newcommand{\vlongto}{\mbox{{\Large{$\longto$}}}} 
\newcommand{\limdir}[1]{{\displaystyle{\mathop{\rm lim}_{\buildrel\longrightarrow\over{#1}}}}\,} 
\newcommand{\liminv}[1]{{\displaystyle{\mathop{\rm lim}_{\buildrel\longleftarrow\over{#1}}}}\,} 
\newcommand{\norm}[1]{\mbox{$\parallel{#1}\parallel$}} 
\newcommand{\boxtensor}{{\Box\kern-9.03pt\raise1.42pt\hbox{$\times$}}} 
\newcommand{\into}{\hookrightarrow} \newcommand{\image}{{\rm image}\,} 
\newcommand{\Lie}{{\rm Lie}\,} 
\newcommand{\CM}{\rm CM}
\newcommand{\sext}{\mbox{${\mathcal E}xt\,$}} 
\newcommand{\shom}{\mbox{${\mathcal H}om\,$}} 
\newcommand{\coker}{{\rm coker}\,} 
\newcommand{\sm}{{\rm sm}} 
\newcommand{\tensor}{\otimes} 
\renewcommand{\iff}{\mbox{ $\Longleftrightarrow$ }} 
\newcommand{\supp}{{\rm supp}\,} 
\newcommand{\ext}[1]{\stackrel{#1}{\wedge}} 
\newcommand{\onto}{\mbox{$\,\>>>\hspace{-.5cm}\to\hspace{.15cm}$}} 
\newcommand{\propsubset} {\mbox{$\textstyle{ 
\subseteq_{\kern-5pt\raise-1pt\hbox{\mbox{\tiny{$/$}}}}}$}} 
\newcommand{\sB}{{\mathcal B}} \newcommand{\sC}{{\mathcal C}} 
\newcommand{\sD}{{\mathcal D}} \newcommand{\sE}{{\mathcal E}} 
\newcommand{\sF}{{\mathcal F}} \newcommand{\sG}{{\mathcal G}} 
\newcommand{\sH}{{\mathcal H}} \newcommand{\sI}{{\mathcal I}} 
\newcommand{\sJ}{{\mathcal J}} \newcommand{\sK}{{\mathcal K}} 
\newcommand{\sL}{{\mathcal L}} \newcommand{\sM}{{\mathcal M}} 
\newcommand{\sN}{{\mathcal N}} \newcommand{\sO}{{\mathcal O}} 
\newcommand{\sP}{{\mathcal P}} \newcommand{\sQ}{{\mathcal Q}} 
\newcommand{\sR}{{\mathcal R}} \newcommand{\sS}{{\mathcal S}} 
\newcommand{\sT}{{\mathcal T}} \newcommand{\sU}{{\mathcal U}} 
\newcommand{\sV}{{\mathcal V}} \newcommand{\sW}{{\mathcal W}} 
\newcommand{\sX}{{\mathcal X}} \newcommand{\sY}{{\mathcal Y}} 
\newcommand{\sZ}{{\mathcal Z}} \newcommand{\ccL}{\sL} 
 \newcommand{\A}{{\mathbb A}} \newcommand{\B}{{\mathbb 
B}} \newcommand{\C}{{\mathbb C}} \newcommand{\D}{{\mathbb D}} 
\newcommand{\E}{{\mathbb E}} \newcommand{\F}{{\mathbb F}} 
\newcommand{\G}{{\mathbb G}} \newcommand{\HH}{{\mathbb H}} 
\newcommand{\I}{{\mathbb I}} \newcommand{\J}{{\mathbb J}} 
\newcommand{\M}{{\mathbb M}} \newcommand{\N}{{\mathbb N}} 
\renewcommand{\P}{{\mathbb P}} \newcommand{\Q}{{\mathbb Q}} 

\newcommand{\R}{{\mathbb R}} \newcommand{\T}{{\mathbb T}} 
\newcommand{\U}{{\mathbb U}} \newcommand{\V}{{\mathbb V}} 
\newcommand{\W}{{\mathbb W}} \newcommand{\X}{{\mathbb X}} 
\newcommand{\Y}{{\mathbb Y}} \newcommand{\Z}{{\mathbb Z}} 
\title[HK multiplicity, $F$-threshold and the Paley-Wiener theorem] 
{HK multiplicity, $F$-threshold and the Paley-Wiener theorem} 
\author{V. Trivedi} 
\address{School of Mathematics, Tata Institute of 
Fundamental Research, Homi Bhabha Road, Mumbai-400005, India} 
\email{vija@math.tifr.res.in} 
\date{}

\maketitle
\begin{abstract}{
For  a given algebraically closed  field $k$ of characteristic $p>0$ we consider the set $\sC_k$, of 
graded isomorphism classes of {\em standard graded pairs} $(R, I)$, where 
$R$ is a standard graded ring over the field and $I$ is  a graded ideal of 
finite colength.

 Here we give   
a ring homomorphism 
 $\Pi:\Z[\sC_k] \longto  H(\C)[X]$, where  $H(\C)$ denotes the ring  of entire functions.

The related entire function and the homomorphism $\Pi$ keep track of the two 
 positive characteristic  invariants, $e_{HK}(R, I)$ and $c^I({\bf m})$  of the ring:
(1) composing the map $\Pi$ with the evaluation map at $z=0$ 
gives a ring homomorphism 
 $\Pi_e:\Z[\sC_k] \longto \R[X]$
which sends 
$$(R,I) \to e_{HK}(R^0, IR^0)+ e_{HK}(R^1, IR^1)X+\cdots + 
e_{HK}(R^d, IR^d)X^d,$$
where $R^i$ is the union of $i$ dimensional components 
of $R$ and $e_{HK}(R^i, IR^i)$ is the HK multiplicity of the pair $(R^i, IR^i)$, and in particular the top coefficient is $e_{HK}(R, I)$.

(2) If, in addition,  $R$ is a two dimensional ring or 
$\mbox {Proj~R}$ is strongly $F$-regular, then the   Fourier transform 
${\widehat f}_{R, I}$
 belongs to the Paley-Wiener class of the real number, namely 
 the $F$-threshold $c^I_{\bf m}(R)$ of the maximal ideal ${\bf m}$.}\end{abstract}

\section{Introduction}
Let $(R, I)$ be a standard graded pair, {\em i.e.}, $R$ is a Noetherian
standard graded ring   over an algebraically closed  field $k$ (unless otherwise stated) of
characteristic $p >0$
and $I$ is a graded ideal of finite colength.
 Let ${\bf m}$ be the graded maximal ideal of $R$.

To study Hilbert-Kunz (HK) multiplicity $e_{HK}(R, I)$ of $R$ with respect to $I$, 
we had introduced in [T] a compactly supported continuous function called HK density 
function $f_{R, I}:[0, \infty) 
\longrightarrow [0, \infty)$ provided $\dim R\geq 2$.
This function relates to the two characteristic $p$ invariants of the pair
$(R, I)$.
 \begin{enumerate}
\item $\int_0^{\infty} f_{R,I}(x)dx = e_{HK}(R, I)$.
\item If $R$ is strongly $F$-regular in the punctured spectrum and 
of dimension $\geq 2$, 
or a standard graded domain of dimension $= 2$ then the maximum support $\alpha(R, I)$ 
 of $f_{R, I}$ 
is same as the $F$-threshold of $c^I({\bf m})$.
\end{enumerate}

The function $f_{R, I}$ was given by
$$f_{R, I}(x) = \lim_{n\to \infty}f_n(R, I)(x), \quad\mbox{for all}\quad x\in \R,$$
where $\{f_n(R, I):[0, \infty)
\longrightarrow [0, \infty)\}_{n\in \N}$ is a 
sequence of compactly supported step functions which converges uniformly 
to $f_{R, I}$.

If $\dim~R = 0$ or $1$  then we can still define the functions
$\{f_n(R, I)\}_n$ exactly in the  same way and they 
 are compactly supported step functions.

If $\dim~R = 1$ then this sequence 
 converges pointwise everywhere except 
at finitely many points, and in fact
 uniformly outside a set of arbitrarily small measure.
Therefore again 
$$e_{HK}(R, I):= \lim_{n\to \infty}\int_0^\infty f_n(R, I)(x)dx = 
\int_0^{\infty} f_{R, I}(x)dx.$$

However, if $\dim~R = 0$ then the sequence $\{f_n(R, I)\}_n$ does converge 
pointwise but  
 $f_{R,I} := \lim_{n\to \infty}f_n(R, I)$ is  $0$ everywhere. Whereas 
$$e_{HK}(R, I)  = \ell(R) \neq  \int_0^\infty f_{R, I}(x)dx = 0.$$

In this paper to every standard graded pair $(R,I)$ of dimension $d\geq 0$,  
we associate  an
entire function (that means a holomorphic function on whole of $\C$) 
$F_{R, I}$ in a uniform manner.  The function 
$F_{R, I}$ keeps track of the invariants $e_{HK}(R, I)$ and 
$\alpha(R,I)$, and hence $c^I({\bf m})$ whenever it coincides with $\alpha(R, I)$.

To do this, we use the Fourier transform, as follows.

We note that  each $f_n(R, I)$ is a compactly supported step function 
  and therefore belongs to $L^1(\R)$.
Now the Plancherel Theorem (see preliminaries)  implies that the Fourier 
transform ${\widehat f}_n(R, I)$ of $f_n(R, I)$ is 
 a well defined entire function.

We show that 
$\lim_{n\to \infty} {\widehat f_n}(R, I)(z)$ exists for every $z\in \C$ and for 
every $d\geq 0$.
If we denote the limiting 
function as $F_{R, I}:\C\longrightarrow \C$, {\em i.e.}, 
 
$$F_{R, I}(z) := \lim_{n\to \infty} {\widehat f_n}(R, I)(z)$$
then $F_{R, I}(0) = e_{HK}(R, I)$.

If  $\dim~R\geq 1$ then  $F_{R, I}$  is the Fourier transform of the 
HK density function 
$f_{R, I} = \lim_{n\to \infty} f_n({R, I})$.  
Moreover if $\dim~R=1$ then there is a finite set $T_{R, I}$ of integers 
such that if $(S, J)$ is a standard graded pair then 
$$F_{R, I} = F_{S, J}\iff 
f_{R, I} = f_{S, J}\quad \mbox{for all}\quad x\in \R\setminus T_{R, I}.$$
Since $f_{R,I}$ is continuous if $\dim~R\geq 2$ we get
$\{f_{R,I}\mid \dim~R\geq 2\}\hookrightarrow H(\C)$.

One of the main result (Theorem~\ref{t1}) of this paper is to show that 
the correspondence  $(R, I) \to F_{R,I}$ is algebraic,  in the 
following sense.

Let $\sC_k$ denote the isomorphism classes of standard graded 
pairs $(R, I)$, where $R_0 = k$ is an algebraically closed  field. Then there is a multiplicative 
map of monoids 
$$\Phi:(\sC_k, \tensor) \longrightarrow H(\C)~~\text{given by}~~ (R, I) \to 
F_{R, I},$$
where the identity element $(k, (0))$ of the monoid $\sC_k$ maps to 
the identity element of $H(\C)$, namely the constant map 
$F_{k, (0)}:\C\longto \C$ given by $z \to 1$. 

Further  this map extends to the ring homomorphism 
$\Pi:\Z[\sC_k]\longrightarrow H(\C)[X]$ such that 
$$(R, I) \to F_{R^0, IR^0} + F_{R^1, IR^1}X + \cdots + 
F_{R^d, IR^d}X^d,$$
 where $R^i$ denotes the $i^{th}$-dimensional component of $R$ and $d = \dim~R$.

Moreover if $e:H(\C)[X] \longrightarrow \C$ denotes the evaluation map at $0$, 
{\em i.e.}, $F\to F(0)$ then the composition map
$e\circ \Pi:\Z[\sC_k] \longrightarrow \R[X]$ is a ring homomorphism
which sends
$$(R,I) \to e_{HK}(R^0, IR^0)+ e_{HK}(R^1, IR^1)X+\cdots +
e_{HK}(R^d, IR^d)X^d,$$
where  we know by the existing theory that  $e_{HK}(R^d, IR^d) = e_{HK}(R, I)$. 

If $\sC_k^{1} = \{(R, I)\in \sC_k\mid \dim~R\geq 1\}$
then $\sC_k^{1}$ is $\Z[\sC_k]$-module such that
$\Pi\mid_{\sC_k^1}:\sC_k^{1}\longrightarrow H(\C)[X]$ given by  
$$(R, I) \to {\widehat f}_{R^1, IR^1}X + \cdots + 
{\widehat f}_{R^d, IR^d}X^d$$
is  a $\Z[\sC_k]$-linear map and therefore 
$(e\circ \Pi)_{1}:\sC_k^{1} \longrightarrow \R[X]$ is a $\Z[\sC_k]$-linear
 which sends
$$(R,I) \to e_{HK}(R^1, IR^1)X+\cdots +
e_{HK}(R^d, IR^d)X^d.$$

Recall that an entire function $F:\C\longto \C$ is of {\em exponential} 
type, if there exist constants $c_0$ and $c_1$ such that 
$|F(z)|\leq c_0e^{c_1|z|}$, for all $z\in \C$. Here we say that the 
exponential index of $F$ is $(c_0, c_1)$ if $c_0$ and $c_1$ are the 
smallest real numbers with this property, and denote this by 
$\mbox{exp.index of}~F = (c_0, c_1)$. 

\begin{thm}\label{t2}For a standard graded pair $(R,I)$ the function  
${F_{R,I}}$ is an entire  function and 
$\mbox{exp.index of}~F = (e_{HK}(R, I), \alpha(R, I))$. That means
 for all $z\in \C$, 
 $$|{F_{R,I}}(z)|  \leq e_{HK}(R,I)e^{\alpha(R,I)|z|},\quad\mbox{where}\;\;
 \alpha(R, I) =\;\;\mbox{the maximal support of}\;\; f_{R,I}.$$

Moreover  $\alpha(R, I)$ is the smallest  real number such that  ${F_{R,I}}$
belongs to the Paley-Wiener class of $\alpha(R, I)$, {\em i.e.},
$${F_{R,I}}\in PW_{\alpha(R,I)}~\mbox{and}~  
{F_{R,I}} \not\in PW_A,~\mbox{ if}~ A < \alpha(R,I).$$

Further for $(R, I)$, $(S, J)\in \sC_k$
$$\begin{array}{lcl}
\mbox{exp.index  of}~F_{(R,I)\tensor (S,J)} & = &  
(\mbox{exp.index  of}~F_{R, I})(\mbox{exp.index  of}~F_{S, J})\\\
& = &  (e_{HK}(R, I)\cdot e_{HK}(S,J),~~ \alpha(R, I)+\alpha(S, J)).\end{array}$$
\end{thm}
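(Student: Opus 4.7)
The plan is to identify $F_{R,I}$ (after disposing of $\dim R = 0$ separately) with the Fourier transform of the HK density function $f_{R,I}$, exploit the fact that $f_{R,I}$ is nonnegative, bounded, and compactly supported in $[0, \alpha(R,I)]$ with total integral $e_{HK}(R,I)$, and then appeal to the classical Paley--Wiener theorem to get the sharpness statement. The tensor product assertions will then follow from the multiplicativity $F_{(R,I)\tensor(S,J)} = F_{R,I}\cdot F_{S,J}$ supplied by Theorem~\ref{t1}.

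For the case $\dim R = 0$, a direct computation from $F_{R,I}(z) = \lim_n \widehat{f_n}(R,I)(z)$ shows $F_{R,I}$ is the constant function $e_{HK}(R,I) = \ell(R)$; since $\alpha(R,I) = 0$ here, both the exponential bound and the Paley--Wiener assertion are immediate. For $\dim R \geq 1$, I would write $F_{R,I} = \widehat{f}_{R,I}$ and estimate, for $z\in\C$,
$$|F_{R,I}(z)| \;\leq\; \int_0^{\alpha(R,I)} f_{R,I}(x)\, e^{x|\mathrm{Im}(z)|}\, dx \;\leq\; e_{HK}(R,I)\, e^{\alpha(R,I)|z|}.$$
The case $\dim R = 1$ is subsumed because $f_{R,I}$ agrees with $\lim_n f_n(R,I)$ off a finite set $T_{R,I}$, which does not affect the integral or the Fourier transform.

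For sharpness of the exp.index, evaluating at $z = 0$ gives $F_{R,I}(0) = e_{HK}(R,I)$, forcing $c_0 \geq e_{HK}(R,I)$. For the exponential rate, boundedness and compact support place $f_{R,I}$ in $L^2(\R)$; by Plancherel $F_{R,I}|_\R \in L^2(\R)$, so $F_{R,I}\in PW_{\alpha(R,I)}$. If $F_{R,I} \in PW_A$ for some $A < \alpha(R,I)$, the Paley--Wiener theorem produces an $L^2$ function supported in $[-A, A]$ whose Fourier transform is $F_{R,I}$; by injectivity of the Fourier transform on $L^2$, this function agrees a.e.\ with $f_{R,I}$, which contradicts the fact that $\alpha(R,I)$ is the supremum of the support of $f_{R,I}$. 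The main obstacle I anticipate is the careful handling of the $\dim R = 1$ case, where $f_{R,I}$ is only pointwise defined off $T_{R,I}$; but since $L^2$ equality is an almost-everywhere notion, this does not obstruct the argument.

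For the tensor product statement, Theorem~\ref{t1} gives $F_{(R,I)\tensor(S,J)} = F_{R,I}\cdot F_{S,J}$. Setting $z = 0$ yields $e_{HK}((R,I)\tensor(S,J)) = e_{HK}(R,I)\,e_{HK}(S,J)$, and multiplying the two exponential bounds gives
$$|F_{R,I}(z)\,F_{S,J}(z)| \;\leq\; e_{HK}(R,I)\, e_{HK}(S,J)\, e^{(\alpha(R,I)+\alpha(S,J))|z|}.$$
For sharpness of the combined exponential index, take the inverse Fourier transform of $F_{R,I}\cdot F_{S,J}$ to obtain the convolution $f_{R,I} * f_{S,J}$, whose support (as the convolution of two nonnegative compactly supported functions) is the Minkowski sum $\supp f_{R,I} + \supp f_{S,J}$, with supremum precisely $\alpha(R,I) + \alpha(S,J)$. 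Reapplying the Paley--Wiener argument to the product then forbids membership of $F_{R,I}\cdot F_{S,J}$ in $PW_A$ for any $A < \alpha(R,I)+\alpha(S,J)$, completing the computation of the exp.index.
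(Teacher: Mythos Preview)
Your proposal is correct and follows essentially the same approach as the paper: handle $\dim R=0$ by direct computation, then for $\dim R\geq 1$ identify $F_{R,I}=\widehat{f}_{R,I}$, bound $|F_{R,I}(z)|$ by pulling $e^{\alpha|z|}$ out of the integral, and use the $L^2$-isometry of the Fourier transform together with Paley--Wiener to rule out $F_{R,I}\in PW_A$ for $A<\alpha$. Your treatment of the tensor product statement via convolution and Minkowski sums of supports is in fact more explicit than the paper's proof, which stops after establishing the exponential index for a single pair and leaves the multiplicativity to follow from Lemma~\ref{l2} and Corollary~\ref{c1}.
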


The above theorem  gives the following 
\begin{cor}\label{c2}If $(R, I)$ is a standard graded pair and  either (1) 
 $R$ is 
a two dimensional domain or (2) $\mbox{Proj}~R$ is a strongly $F$-regular then 
the entire function  
${F_{R,I}}$ has exponential index 
$(e_{HK}(R, I), c^I({\bf m}))$ and $F_{R, I}\in PW_{c^I({\bf m})}$.
\end{cor}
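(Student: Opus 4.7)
The plan is to deduce the corollary directly by combining Theorem~\ref{t2} with the identification $\alpha(R, I) = c^I(\mathbf{m})$ recalled as item (2) in the introduction: Theorem~\ref{t2} already pins down both the exponential index and the Paley--Wiener class of $F_{R, I}$ in terms of $e_{HK}(R, I)$ and $\alpha(R, I)$, so the only work is to reinterpret the second coordinate $\alpha(R, I)$ as $c^I(\mathbf{m})$ under the two hypotheses.

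First I would handle case (1). If $R$ is a two-dimensional standard graded domain, the equality $\alpha(R, I) = c^I(\mathbf{m})$ is exactly one of the two situations singled out in item (2) of the introduction (coming from [T]), so there is nothing further to check.

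Next I would handle case (2). When $\mbox{Proj}\,R$ is strongly $F$-regular, I would observe that for every non-maximal homogeneous prime $\mathfrak{p}$ of $R$ the local ring $R_{\mathfrak{p}}$ is obtained (up to an invertible element) as a localization of the homogeneous coordinate ring of $\mbox{Proj}\,R$ at the point defined by $\mathfrak{p}$, and strong $F$-regularity is preserved under localization. Hence $R$ is strongly $F$-regular on the punctured spectrum, and assuming $\dim R \geq 2$ the second clause of item (2) in the introduction gives $\alpha(R, I) = c^I(\mathbf{m})$.

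With $\alpha(R, I) = c^I(\mathbf{m})$ established in both cases, I plug this equality into Theorem~\ref{t2}: the exponential index of $F_{R, I}$ becomes $(e_{HK}(R, I), c^I(\mathbf{m}))$, and the minimality clause of the theorem gives $F_{R, I} \in PW_{c^I(\mathbf{m})}$ together with $F_{R, I} \notin PW_A$ for any $A < c^I(\mathbf{m})$. The only nontrivial point is the verification that the global hypothesis on $\mbox{Proj}\,R$ in case (2) indeed passes to the punctured spectrum of $R$; once that is noted, the corollary is a direct translation of Theorem~\ref{t2}.
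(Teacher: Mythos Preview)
Your proposal is correct and follows essentially the same route as the paper: both arguments reduce the corollary to Theorem~\ref{t2} via the identification $\alpha(R,I)=c^I(\mathbf{m})$, which the paper obtains by citing Theorem~4.9 of [TrW] and Theorem~C of [T2] (exactly the two results packaged as item~(2) in the introduction). Your extra step in case~(2), passing from strong $F$-regularity of $\mbox{Proj}\,R$ to strong $F$-regularity of $R$ on the punctured spectrum, is a point the paper leaves implicit; it is a reasonable clarification, though the precise mechanism is that for a non-maximal homogeneous prime $\mathfrak{p}$ one has $R_{\mathfrak{p}}$ as a localization of a Laurent polynomial extension of the stalk $\mathcal{O}_{\mathrm{Proj}\,R,\mathfrak{p}}$, and strong $F$-regularity ascends along such extensions.
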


\section{preliminaries}

\subsection{Some fundamental results  from analysis} In this section 
we recall basics from real and complex analysis. 
For further details reader may
refer to the classical book of W. Rudin ([R]).

Here we consider $\R$ as an Euclidean space and with Lebesgue measure $dt$.
By {\em almost everywhere} (a.e.)  we mean outside a measure $0$ subset of $\R$.

If $z = x+iy \in \C$, then  $|z| = \sqrt{x^2+y^2}$.

If $f:X\longto \C$ then the supremum norm of $f$ is given as 
$\|f\| = \mbox{sup}~\{|f(z)|\mid {z\in X}\}$.

 For an integer $1\leq p <\infty $ and a measurable 
space $X$, 
$$L^p(X) = \{\mbox{the measurable functions}\quad f:X\longto \C\quad \mid
\int_X|f(x)|^pdx < \infty\},$$ 
and the  $L^p$ norm on the space  $L^p(X)$ is given by 
$\|f\|_p = \int_X|f(x)|^pdx$.

\vspace{5pt}

\noindent Example: In this paper we would be dealing mainly with the following 
set  of functions.
$$C^{ae}_c(\R) = \{f:\R\longto \R\mid f~~\mbox{is compactly supported
 and a.e. continuous}\}.$$  
It follows easily that  $f\in C^{ae}_c(\R)$ implies 
 $f\in L^p(\R)$, for every $1\leq p < \infty $.

\vspace{5pt}

The binary operation 
$\ast: L^1(\R)\times L^1(\R) \longto L^1(\R)$ given by 
$$(f\ast g)(x) =  \int_{-\infty}^{\infty}f(x-t)g(t)dt,\quad\mbox{for}\quad
 f, g \in L^1(\R)$$ 
is called the 
 {\em convolution}.
It is easy to check that 
the map restricted to the space $C_c^{ae}(\R)$ 
 gives the map $\ast: C_c^{ae}(\R)\times C_c^{ae}(\R) \longto 
C_c(\R)$.
 In fact 
if  $\mbox{supp}~ f = [0, a_1]$ and 
$\mbox{supp}~ g = [0, a_2]$ then 
$f\ast g$ is a compactly supported continuous function with 
$\mbox{supp}~ f\ast g  = [0, a_1+a_2]$.

\vspace{5pt}

The set 
$$H(\C) = \{f:\C\longto \C\mid f\quad\mbox{is an entire function}\}$$
is a commutative ring with pointwise addition and multiplication.

The
{\em Fourier transform} map, $F:C_c^{ae}(\R) \longto H(\C)$ 
is given by $f\to {\widehat f}$, where 
$${\widehat f}(z) = \int_{\R} f(t)e^{i tz}dt, \quad\mbox{for}\quad z\in \C.$$

Moreover the map $F$ is multiplicative: it takes convolution of two 
functions to the pointwise multiplication of their Fourier transforms, {\em i.e.},
  ${\widehat {f\ast g}} = 
{\widehat f}\cdot {\widehat g}$ (see 9.2~(c)~~[R]).

\vspace{5pt}

A function $g:\R\longto \R$  is said to vanish at infinity, if  
for a given $\epsilon >0$ there is a compact set $K \subset \R$ such that 
$|g(x)|<\epsilon $, for all $x\not\in K$. 
 $$\begin{array}{lcl}
 C_0(\R) & = & \{g:\R\longto \R\mid g~\mbox{is 
continuous function and vanishes at infinity}\},\\\
C_c(\R)  & = & \{g:\R\longto \R\mid g~\mbox{is 
continuous function and has compact support}\}.\end{array}$$

\vspace{5pt}

The functions arising as   the HK density functions of standard 
graded pairs belong to the set  $C_c^{ae}(\R)$.
The relation between such functions and their  Fourier transforms works 
very well due to
the Plancherel theorem (page~404 in [R]) 
 and Paley-Wiener theorem (Theorem~19.2 in [R]).

\vspace{5pt}

\noindent{\bf The Plancherel Theorem}.\quad If $f\in L^1(-A, A)$, for 
some $A \in \R_+$ then its   
{\em  Fourier transform}  
is an entire function (denoted as ${\widehat f}\in H(\C))$
such that 
\begin{enumerate}
\item the  restriction of ${\widehat f}$ to the real axis lies in $L^2(\R)$. Moreover
\item  ${\widehat f}$ is of
exponential type {\em i.e.} there exist positive constants $C$ and $A$ such that
$$|{\widehat f}(z)| \leq C e^{A|z|}~~\mbox{for all}~~ 
z\in \C,~~\mbox{where}~~C = \int_{-A}^A|f(t)|dt.$$
\end{enumerate}

\vspace{5pt}

The Paley-Wiener theorem is 
 the  converse of the Plancherel Theorem.

\vspace{5pt}

\noindent{\bf The Paley-Wiener theorem}.\quad 
For an entire function $G:\C\longto \C$,
\begin{enumerate}
\item if $A$ and $C$ are positive constants such that   
$|G(z)| \leq C e^{A|z|}$, for all $z\in \C$ and 
\item $G\mid_{\R}\in L^2(\R)$
\end{enumerate}
then there exists a real valued function $f\in L^2([-A, A])$ such that 
$G(z) = {\widehat f}(z)$.
Such an entire function $G$ is said to {\em belong to the Paley-Wiener class} $A$ and this is 
denoted by $G \in PW_A$.

\vspace{5pt}

\begin{rmk}If $f$, $g \in C_c^{ae}(\R)$ then 
$${\widehat f} = {\widehat g} \implies \|{\widehat f}-{\widehat g}\|_2 = 0 \iff
\|f- g \|_2 = 0 \implies f =  g\quad \mbox{a.e.},$$
where the second implication follows from the fact (Theorem~9.13~(c) in [R])
that the map $F:L^2(\R)\longto L^2(\R)$ given by $f\to {\widehat f}$ is an 
isomorphism of Hilbert-space, {i.e.}, $\|f-g\|_2 = 
\|{\widehat f}-{\widehat g}\|_2$.
In particular,  if $f, g \in C_c(\R)$ then $f = g$. This gives an embedding 
  $F\mid_{C_c(\R)}:C_c(\R)\hookrightarrow  H(\C)$.
\end{rmk}
\vspace{5pt}

\subsection{Some relevant results from commutative algebra}

\begin{defn}
Let $k$ be  an algebraically closed field  
of characteristic $p>0$. 
Let $R = \oplus_{n\geq 0}R_n$ be a 
standard   graded Noetherian ring   such that 
$R_0 = k$. Let   ${\bf m} = \oplus_{m\geq 0} R_m$ denote  
the graded maximal ideal of $R$. Let 
 $I\subset R$ be  a  graded ideal of finite colength.

\vspace{5pt}

Henceforth we will called
such a pair  an {\em SG pair}.\end{defn}

Given a SG pair $(R, I)$ of dimension $d\geq 0$ we can associate a sequence of step functions 
$\{f_n(R, I):[0, \infty)\longto [0, \infty)\}_n$ which are given by

\begin{equation}\label{step}
f_n(R, I)(x) = \frac{1}{q^{d-1}}\ell\Big(\frac{R}{I^{[q]}}\Big)_{\lfloor xq\rfloor},
\quad \mbox{where}\quad q= p^n.\end{equation} 

This is a  compactly supported a.e. continuous function. In 
fact the support of $f_n(R, I) \subset [0, n_0]$, 
where  $n_0$ is a  number independent of $n$.

\vspace{5pt}

\noindent{\bf Theorem}\label{T}~(Theorem 1.1 of [T]).\quad If $R$ is a standard 
graded ring with $\dim~R\geq 2$ then 
$\{f_n(R, I)\}$ is a uniformly convergent  sequence. If $f_{R, I} = 
\lim_{n\to \infty}f_n(R, I)$ then  $f_{R, I}$ is a compactly sopported 
continuous function and 
$$e_{HK}(R,I) = \int_{\R}f_{R, I}(x)dx.$$

\vspace{5pt}

\begin{defn}For a given SG pair $(R, I)$ the function $f_{R,I}$ is the {\em HK 
density function} of $(R, I)$.
Let 
$$\alpha(R, I) = \mbox{sup}~\{x \mid f_{R, I}(x)\neq 0\}$$
the maximum support of $f_{R, I}$.
\end{defn}

The HK density function relates to another 
characteristic $p$ invariant of the ring.
We first  recall the notion (introduced by [MTW]) of 
$F$-threshold of $I$ with respect to $J$ 
$$c^J(I) := \lim_{e\to \infty}\frac{\max\{r\mid I^r\nsubseteq 
J^{[p^e]}\}}{p^e},$$
where the existence of this limit was proved in [DsNbP].

\vspace{5pt}

We also recall the following definition from [HH].

\begin{defn}\label{d6}A  Noetherian domain $R$ such that $R\longto R^{1/p}$
is module finite over $R$, is {\em strongly $F$-regular} if for every nonzero
$c\in R$ there exists $q$ such that $R$-linear map $R\longto R^{1/q}$
that sends $1$ to $c^{1/q}$ splits as a map of $R$-modules, {\em i.e.}
iff $Rc^{1/q}\subseteq R^{1/q}$ splits over $R$.
\end{defn}

Note that if $\mbox{Proj}~R$ is smooth then it is $F$-regular on the puctured 
spectrum.

\vspace{5pt}

\noindent{\bf Theorem}\label{TrW}~(Theorem~4.9, [Tr W]).\quad{\em Let $(R, I)$ be a standard 
graded pair
and ${\bf m}$ be the graded maximal ideal of $R$.
If $R$ is strongly $F$-regular on the punctured
spectrum (for example if $\mbox{Proj}~R$ is smooth) then
$\alpha(R, I) = c^I({\bf m}).$}

\vspace{5pt}

\noindent{\bf Theorem~}\label{T2}~(Theorem~C, [T2]).~~{\it Let $(R, I)$ be a
 standard graded pair where $R$ is a two dimensional domain. Then
$$c^I({\bf m}) = \alpha(R, I).$$}

\begin{rmk}If $\dim~R =1$ then it is obvios that 
$$c^I({\bf m}) = \lim_{e\to \infty}\mbox{max}\{x\mid 
\ell\Big(\frac{R}{I^{[p^e]}}\Big)_{\lfloor xq
\rfloor} \neq 0 \}= \alpha(R, I).$$
If $\dim~R = 0$ then again  $c^I({\bf m}) = 0$ and 
$$\alpha(R, I) = \mbox{Sup}~\{x\mid \lim_{n\to \infty}f_n(R, I)(x)\neq 0\} = 0.$$
\end{rmk}

\section{A map from SG pairs to entire functions}

We again recall that by a SG pair  we mean 
 $R$ is a standard graded ring over an algebraically closed field  $k$ 
and $I \subset R$ is a graded ideal such that $\ell(R/I) <\infty$.
In the rest of the paper, all  SG pairs $(R,I)$  are considered over 
a fixed algebraically closed field  
$k$ of characteristic $p>0$.

We consider a monoid $(\sC_k, \tensor)$  generated by sg pair as follows.
Let 
$$\sC_k= \{(R,I)\mid (R,I)~~\mbox{a sg pair over}~k\}\}/\equiv,$$ 
where $(R, I) \equiv (S,J)$ if there is a 
map $\eta:R\longrightarrow S$
which is a graded isomorphism of rings of degree $0$  
 such that $\eta(I) = J$. In other words
 $\sC_k$ is the  isomorphic classes of sg pairs.
For
$(R, I), (S, J)\in \sC_k$, we define
$$ (R, I)\tensor (S, J) = (R\tensor_k S, I\tensor_k J),~~\mbox{where}~~ 
(R\tensor S)_n = \oplus_i (R_i\tensor S_{n-i})$$
and $I\tensor_k J = \oplus_i (I_i\tensor_kJ_{n-i})$.

The identity element of this monoid is $(k, (0))$, where $k = 
k\oplus 0 \oplus \cdots $ is the standard graded ring with 
 $0^{th}$ component as $k$ and $n^{th}$ component $=0$ if $n\neq 0$.

Now for  a SG pair  $(R,I)$ of dimension $d$, we associate an entire function.
Consider $f_n(R, I)$ as in (\ref{step}).
Since each $f_n(R, I)$ is a compactly supported a.e. 
continuous function, its Fourier transform is an entire function.

We know (see [T]) that if 
$\dim~R\geq 2$ then   $\{f_n(R, I)\}_n$ converges uniformly to 
$f_{R,I}$, the HK density  function of $(R,I)$. Moreover if  $\dim~R =1$ and 
$R$ is reduced then the convergence is pointwise.

\begin{lemma}\label{l11}If $(R, I)$ is a SG pair and $\dim R = 1$ then
there exists a finite set of integers 
$T_{R, I} = \{0 = d_0 < d_1 < d_2 <\cdots < d_{s_1}\}$ and a set of constants
$C_1 > C_2 >  \cdots > C_{s_1} > 0 $ such that for all $p^n = q\gg 0$ we have 

$$\begin{array}{llll}
f_n(R, I)(x) & \leq  &  C_{i+1} &  
\mbox{if}~~~ \frac{\lfloor xq\rfloor}{q} \in [d_i,d_i+\frac{1}{q}, \ldots, 
d_{i}+\frac{m_0-1}{q})\\\
& = & C_{i+1} & \mbox{if}~~ \frac{\lfloor xq\rfloor}{q}\in 
[d_i+\frac{m_0}{q}, d_i+\frac{m_0+1}{q}, \ldots, d_{i+1} -\frac{1}{q})\\\
 & = & 0 & \mbox{if}~~~\frac{\lfloor xq\rfloor}{q} \geq d_{s_1}+\frac{m_0}{q}, 
\end{array}$$
where  $1\leq i+1 \leq {s_1}$. 

In particular 
$f_{R,I}(x) = C_{i+1}$ if $x\in (d_i, d_{i+1})$ and 
 the sequence $\{f_n(R, I)\}$ converges to 
$f_{R,I}$ uniformly outside a set 
of any arbitrarily small measure which contains $T_{R, I}$. 
\end{lemma}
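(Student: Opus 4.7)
\emph{Proof plan.} In dimension $d=1$ the normalization $1/q^{d-1}$ equals $1$, so \eqref{step} collapses to $f_n(R,I)(x)=H_q(\lfloor xq\rfloor)$, where $H_q(n):=\dim_k(R/I^{[q]})_n$. The plan is to show that for $q\gg 0$ the integer-valued function $n\mapsto H_q(n)$ is a non-increasing step function whose plateaus have the form $[qd_i+O(1),\,qd_{i+1})$ with values $C_{i+1}$, from which the three-part staircase description of $f_n(R,I)$ follows directly by substitution.

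\emph{Monotonicity step.} Since $\dim R=1$, there is a constant $n_0$ depending only on $R$ with $\dim_k R_n=e(R)$ for every $n\geq n_0$. The finite-length submodule $\Gamma_{{\bf m}}(R)$ changes $H_q(n)$ only in degrees below a fixed constant, which is absorbed into $m_0$. On the quotient, prime avoidance in $R_1$ yields a non-zero-divisor $\ell\in R_1$, and multiplication by $\ell$ gives an isomorphism $R_n\stackrel{\sim}{\to}R_{n+1}$ for $n\geq n_0$. Because $\ell\cdot I^{[q]}\subseteq I^{[q]}$, this restricts to an injection $(I^{[q]})_n\hookrightarrow(I^{[q]})_{n+1}$, so $\dim_k(I^{[q]})_n$ is non-decreasing and $H_q$ is non-increasing on $[n_0,\infty)$ with integer values in $\{0,1,\ldots,e(R)\}$.

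\emph{Locating the jumps.} For each one-dimensional minimal prime $P_j$ of $R$, the quotient $R/P_j$ is a standard graded one-dimensional domain over the algebraically closed field $k$ with Proj a single rational point, hence $R/P_j\cong k[t_j]$ with $\deg t_j=1$; the graded image $I\cdot(R/P_j)=(t_j^{a_j})$ for a unique integer $a_j\geq 1$, and Frobenius commutativity gives $I^{[q]}\cdot(R/P_j)=(t_j^{qa_j})$. Setting $\mu_j:=\ell(R_{P_j})$, letting $d_1<\cdots<d_{s_1}$ be the distinct values in $\{a_1,\ldots,a_s\}$, and defining $C_{i+1}:=\sum_{j:\,a_j>d_i}\mu_j$, I would use a graded prime filtration $0=M_0\subset\cdots\subset M_r=R$ with factors $(R/Q_i)(-\sigma_i)$—each minimal prime $P_j$ appearing exactly $\mu_j$ times, any embedded prime ${\bf m}$ contributing only finite-length factors, and the shifts $\sigma_i$ bounded by a constant depending only on $R$. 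Intersecting the filtration with $I^{[q]}$, summing graded Hilbert functions of the successive quotients, and using that each domain factor $(R/P_j)(-\sigma_i)$ contributes the indicator $\mathbf{1}[n-\sigma_i<qa_j]$, one obtains that for every $n$ at distance at least $m_0$ from $\{qd_1,\ldots,qd_{s_1}\}$ the Hilbert function equals exactly $\sum_{j:\,n<qa_j}\mu_j$, yielding $H_q(n)=C_{i+1}$ on each plateau $[qd_i+m_0,\,qd_{i+1})$. On the transition window $[qd_i,\,qd_i+m_0)$ the bound $H_q(n)\leq C_{i+1}$ follows because, for $q$ large, every summand with $a_j\leq d_i$ has already contributed zero by $n=qd_i$, so only indicators with $a_j>d_i$ can be nonzero.

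\emph{Conclusion and main obstacle.} Translating $n=\lfloor xq\rfloor$ back to $x$ gives the three-part formula for $f_n(R,I)$, the pointwise limit $f_{R,I}(x)=C_{i+1}$ on each open interval $(d_i,d_{i+1})$ is immediate, and off any open neighborhood $U\supset T_{R,I}=\{d_0,\ldots,d_{s_1}\}$, once $q$ is large enough that the $s_1$ transition windows of width $m_0/q$ lie inside $U$, the step function $f_n$ agrees with $f_{R,I}$ exactly on $\R\setminus U$—giving uniform convergence off any prescribed neighborhood of $T_{R,I}$, in particular outside a set of arbitrarily small measure. I expect the main obstacle to be the second step: arranging that the grading shifts and the finite-length corrections from the prime filtration combine into a single constant $m_0$ uniform in $q$. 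Since the shifts depend only on fixed generators of $R$ and $I$ and the finite-length corrections stabilize in a bounded degree range, such an $m_0$ does exist, but keeping careful track of these corrections (especially when $R$ is non-reduced, so that embedded primes at ${\bf m}$ and the multiplicities $\mu_j>1$ both come into play) is the technical heart of the argument.
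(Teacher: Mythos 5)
Your strategy matches the paper's in outline: identify the jump locations $d_i$ from the degrees $a_j$ at which $I$ generates in the one-dimensional quotients of $R$, and write the plateau heights as sums of local multiplicities. Your $C_{i+1}=\sum_{j:\,a_j>d_i}\mu_j$ agrees with the paper's $\sum_{Q}e_0(R/Q)$, because $e_0(R/Q_j)=\ell(R_{P_j})$ once one notes, as you do, that $R/P_j\cong k[t_j]$, and your $a_j$ coincides with the paper's $d_{Q_j}$ (non-zerodivisor in $R/Q_j$ is the same as lying outside $P_j$). The bookkeeping, however, is organized differently: the paper passes through the primary decomposition and the comparison map $g:R\to\prod_i R/Q_i$ (finite kernel $H^0_{\bf m}(R)$ and finite cokernel) and then runs an explicit case analysis of $\ell(R/(I^{[q]}+Q_i))_m$ on the four ranges $m<m_0$, $m_0\le m<qd_{Q_i}$, $qd_{Q_i}\le m<qd_{Q_i}+m_0$, $m\ge qd_{Q_i}+m_0$, whereas you filter $R$ itself by a graded prime filtration. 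Your monotonicity step (multiplication by a degree-one non-zerodivisor) is a nice extra observation the paper does not use.

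The filtration step, as written, only delivers an upper bound and so leaves a gap. The induced filtration of $R/I^{[q]}$ has $i$-th quotient $\overline{M_i}/\overline{M_{i-1}}\cong M_i/(M_{i-1}+M_i\cap I^{[q]})$, and this is merely a \emph{quotient} of $(M_i/M_{i-1})/I^{[q]}(M_i/M_{i-1})\cong\big(k[t_j]/(t_j^{qa_j})\big)(-\sigma_i)$, because in general $M_i\cap I^{[q]}$ may strictly contain $M_{i-1}+I^{[q]}M_i$. Thus you obtain $H_q(n)\le\sum_i\mathbf{1}\big[\,0\le n-\sigma_i<qa_j\,\big]$ for free, but the matching lower bound, and hence the claimed exact equality $H_q(n)=\sum_{j:\,n<qa_j}\mu_j$ on the plateaus, is not established by the argument you sketch; it requires a separate ingredient (such as the comparison with $\prod_i R/(Q_i+I^{[q]})$ in high degree, which is essentially what the paper uses). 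Relatedly, your diagnosis that the finite-length corrections ``stabilize in a bounded degree range'' is not quite right: the error terms in any such comparison concentrate not only in bounded low degrees but also near the threshold degrees $qa_j$ (this is visible already in the $\operatorname{Tor}$-type corrections when one compares $R/I^{[q]}$ with $\prod_i R/(Q_i+I^{[q]})$), and this is precisely why the lemma allows an $m_0$-window around \emph{every} $d_i$, not only around $0$. So the overall route is sound and close to the paper's, but the lower bound on the plateaus and the correct placement of the error windows both need to be filled in.
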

\begin{proof}
Let $H^0_{\bf m}(R)  = Q_1\cap~ Q_2 \cap~ \cdots \cap~ Q_s$ be the 
primary decomposition of $H^0_{\bf m}(R)$ in $R$. Let $\{p_1, \ldots, p_s\}$ 
be the corresponding prime ideals. Then $\dim~R/Q_i =1$.

We note that, the canonical map 
$g:R \longto  R/Q_1\times \cdots \times R/Q_s$
has kernel and cokernel of finite length, as the induced map by localizing at 
$Q_i$   is an isomorphism for all $i$.

Note that $\mbox{Ass}~R/Q_i = \{\sqrt{Q_i} = p_i\}$
and there exists $m_0$ such that $\ell(R/Q_i)_m = e_0(R/Q_i, {\bf m})$,
 for $m\geq m_0$, and  $\ell(R/Q_i)_m \leq  e_0(R/Q_i, {\bf m})$ otherwise.

Moreover, for  any element $x$ of $I$, either $x$ is nilpotent or a non zerodivisor
in $R/Q_i$.
Hence we can choose $q \gg 0$ such that $(I^{[q]}+Q_i)/Q_i$ is generated by 
non zerodivisor of $R/Q_i$, for all $i$.
Let 

$$d_{Q_i} = \mbox{min} \{\deg x\mid x\in I~~\mbox{and is a non zerodivisor of}~~ R/Q_i\}.$$
We choose  $q\gg 0$ such that  $qd_{Q_i} > m_0$.
Now

$$\begin{array}{llll}
\ell(\frac{R}{I^{[q]}+Q_i})_m &  =  \ell(R/Q_i)_m  & \leq e_0(R/Q_i) 
& \mbox{if}~~m< m_0\\\
  &  =   \ell(R/Q_i)_m & =  e_0(R/Q_i) & \mbox{if}~~m_0 \leq  m < qd_{Q_i}\\\
    & <  \ell(R/Q_i)_m & =  e_0(R/Q_i) & \mbox{if}~~ qd_{Q_i} \leq  m < m_0+ qd_{Q_i}\\\
 & =  0 & & \mbox{if}~~  m_0+ qd_{Q_i} \leq m,
\end{array}$$
where the last assertion follows by choosing a non zerodivisor $x_i\in I$ 
of the ring $R/Q_i$ and 
considering
 the exact sequence 
$$0\longto \big(\frac{R}{Q_i}\big)_m\longby{\times x_i^q} 
\big(\frac{R}{Q_i}\big)_m \longto 
\big(\frac{R}{x_i^q+Q_i}\big)_m\longto 0.$$
Now we have 
$$\begin{array}{llll}
f_n\big(\frac{R}{Q_i}, \frac{I+Q_i}{Q_i}\big)(x) & \leq & e_0({R}/{Q_i}) & 
\mbox{if}~~ \frac{\lfloor xq\rfloor}{q}\in \{\frac{1}{q}, \ldots, \frac{(m_0-1)}{q}\}\\\
 & =  & e_0(R/Q_i) & 
\mbox{if}~~ \frac{\lfloor xq\rfloor}{q} \in \{\frac{m_0}{q}, \ldots, 
d_{Q_i}-\frac{1}{q}\}\\\
& < & e_0({R}/{Q_i}) & 
\mbox{if}~~ \frac{\lfloor xq\rfloor}{q}\in \{d_{Q_i}, 
\ldots, d_{Q_i}+\frac{(m_0-1)}{q}\}\\\
  & = & 0 & 
\mbox{if}~~ \frac{\lfloor xq\rfloor}{q}\geq d_{Q_i}+\frac{m_0}{q}\end{array}$$

We  partition the set
$\{Q_1, \ldots, Q_s\} = {\tilde Q_1} \cup  
{\tilde Q_2} \cup \cdots \cup {\tilde Q_{s_1}}$ such that
 $Q_{j_1}, Q_{j_2}\in {\tilde Q_i}$ implies 
$d_{Q_{j_1}} =  d_{Q_{j_2}} =: d_i$ 

 Without loss of generality we assume 
  $d_1 <  d_2 < \cdots < d_{s_1}$.

Now we further choose $q_0$ such that for $q\geq q_0$
$$m_0
< q d_1 < q d_1 + m_0 < qd_{2} < \cdots <
 qd_{i} < qd_{i}+m_0 < qd_{i+1} <\cdots < q d_{s_1}.$$
 
Hence 
$$\begin{array}{llll}
f_n(R, I)(x) & \leq & \sum_{Q\in {\tilde Q_1}\cup \cdots \cup {\tilde Q_{s_1}}}
e_0(R/Q) & 
\mbox{if}~~ \frac{\lfloor xq\rfloor}{q}\in \{\frac{1}{q},\ldots,  \frac{(m_0-1)}{q}\}\\\
 & =  & \sum_{Q\in {\tilde Q_1}\cup \cdots \cup {\tilde Q_{s_1}}}e_0(R/Q) & 
\mbox{if}~~ \frac{\lfloor xq\rfloor}{q}\in \{\frac{m_0}{q}, \frac{m_0+1}{q},
\ldots, d_{1}-\frac{1}{q}\}\\\
& =  & 
\sum_{Q\in {\tilde Q_{i+1}}\cup \cdots \cup {\tilde Q_{s_1}}}e_0(R/Q) 
+\Delta_i & 
\mbox{if}~~ \frac{\lfloor xq\rfloor}{q}\in \{d_i,d_i+\frac{1}{q}, \ldots, 
 d_{i}+\frac{m_0-1}{q})\\\
& = & 
\sum_{Q\in {\tilde Q_{i+1}}\cup \cdots \cup {\tilde Q_{s_1}} }e_0(R/Q) 
& \mbox{if}~~  \frac{\lfloor xq\rfloor}{q}\in \{d_i+\frac{m_0}{q}, 
\ldots, d_{i+1} -\frac{1}{q}\},
\end{array}$$

where  $|\Delta_i|\leq 
\sum_{Q\in {\tilde Q_1}\cup \cdots \cup {\tilde Q_i}}e_0(R/Q)$.
Therefore for $x\not\in T_{R, I} = \{0 = d_0, d_{1}, d_{2},\ldots, d_{s_1}\}$

$$\begin{array}{llll}
\lim_{n\to \infty}f_n(R, I)(x)
 & =  & \sum_{Q\in {\tilde Q_1}\cup \cdots \cup {\tilde Q_{s_1}}}
e_0(R/Q) & 
\mbox{if}~~ 0 < x < d_{1_1}\\\
 & =  & \sum_{Q\in {\tilde Q_2}\cup \cdots \cup {\tilde Q_{s_1}}}e_0(R/Q) & 
\mbox{if}~~ d_{1} < x < d_{2}\\\
& =  & \sum_{Q\in {\tilde Q_{i+1}}\cup \cdots \cup {\tilde Q_{s_1}}}e_0(R/Q) & 
\mbox{if}~~ d_{i} < x < d_{i+1}\\\
  & = & 0 & 
\mbox{if}~~ x\geq d_{s_1}\end{array}$$
Now we take
$C_{i+1} = \sum_{Q\in {\tilde Q_{i+1}}\cup \cdots \cup {\tilde Q_{s_1}} }e_0(R/Q)$.
\end{proof}

\begin{lemma}\label{l1}The map 
${\tilde \Phi}:\sC_k \longto H(\C)$ given by
$$(R,I)\to F_{R,I},~\mbox{ where}~
F_{R, I}(z) = \lim_{n\to \infty} {\widehat f_n} (R, I)(z)$$
is a well defined function.

Moreover, if $\dim R\geq 1$ then $F_{R, I}\equiv  {\widehat f}_{R, I}$, where
${f}_{R, I}$ is  the HK density function of $(R, I)$.
\end{lemma}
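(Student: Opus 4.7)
The plan is to verify existence of the pointwise limit $F_{R,I}(z) = \lim_{n\to\infty}\widehat{f_n}(R,I)(z)$ separately in the three dimension ranges $d = \dim R \geq 2$, $d=1$, and $d=0$, and in the first two cases to identify the limit with $\widehat{f}_{R,I}$. Well-definedness on isomorphism classes is immediate: a degree-preserving graded isomorphism $\eta:R\to S$ with $\eta(I)=J$ induces $k$-linear isomorphisms $(R/I^{[q]})_m \cong (S/J^{[q]})_m$ for all $q$ and $m$, so the defining step functions $f_n(R,I)$ and $f_n(S,J)$ agree pointwise, and hence so do their Fourier transforms.

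In the case $d \geq 2$, the theorem of [T] quoted in the preliminaries asserts that $f_n(R,I) \to f_{R,I}$ uniformly on $\R$ and all supports lie in a fixed compact interval $[0, n_0]$. For any compact $K \subset \C$ one then has
$$\sup_{z\in K}\bigl|\widehat{f_n}(R,I)(z) - \widehat{f}_{R,I}(z)\bigr| \;\leq\; n_0\,\|f_n - f_{R,I}\|_\infty\, \sup_{z\in K} e^{n_0 |z|} \; \longrightarrow \; 0,$$
so the limit exists, equals $\widehat{f}_{R,I}$, and is entire as a locally uniform limit of entire functions (or directly by the Plancherel Theorem).

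In the case $d = 1$ the sequence no longer converges uniformly, but Lemma~\ref{l11} supplies a uniform constant $C_1 > 0$ and a uniform $A := d_{s_1}+1$ such that $0 \leq f_n(R,I)(t) \leq C_1 \chi_{[0,A]}(t)$ for all sufficiently large $n$ (the finitely many remaining $n$ being handled individually), together with pointwise convergence $f_n(R,I)(t) \to f_{R,I}(t)$ for every $t \notin T_{R,I}$, i.e.\ almost everywhere. For each fixed $z\in \C$ the integrands $f_n(t)e^{itz}$ are therefore dominated uniformly in $n$ by the $L^1$-function $C_1 e^{A|z|}\chi_{[0,A]}(t)$, and the dominated convergence theorem gives $\widehat{f_n}(R,I)(z)\to \widehat{f}_{R,I}(z)$. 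Since $f_{R,I}$ is itself compactly supported and bounded, the Plancherel Theorem guarantees that $\widehat{f}_{R,I}$ is entire; one can moreover upgrade the pointwise convergence to locally uniform convergence in $z$ by noting that the same majorant works uniformly on compact subsets of $\C$.

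Finally, for $d=0$ the ring $R$ is a finite-dimensional $k$-algebra with ${\bf m}^N = 0$ for some $N$, so for $q \geq N$ we have $I^{[q]} \subseteq {\bf m}^{[q]} = 0$ and hence $\ell(R/I^{[q]})_m = \dim_k R_m$ is independent of $q$. A direct calculation gives, for $q \geq N$ and $z \neq 0$,
$$\widehat{f_n}(R,I)(z) \;=\; \sum_{m=0}^{N-1} (\dim_k R_m)\, e^{imz/q}\cdot \frac{q(e^{iz/q}-1)}{iz},$$
and letting $q \to \infty$ each factor tends to $1$, so the expression converges locally uniformly in $z$ to $\sum_m \dim_k R_m = \ell(R)$; the value at $z=0$ is already $\ell(R/I^{[q]}) = \ell(R)$ for $q \geq N$. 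Thus $F_{R,I}$ is the constant entire function $\ell(R)$, consistent with $F_{R,I}(0) = e_{HK}(R,I)$. The principal obstacle is the $d=1$ case, where the failure of uniform convergence of $\{f_n\}$ forces us to extract from Lemma~\ref{l11} a uniform integrable majorant in order to swap the limit and the Fourier integral.
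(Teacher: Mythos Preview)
Your proof is correct and follows essentially the same case division and argument as the paper's own proof. The only stylistic difference is in the $d=1$ case, where you invoke the dominated convergence theorem using the uniform bound $C_1\chi_{[0,A]}$ from Lemma~\ref{l11}, while the paper carries out the equivalent $\epsilon$-argument by hand; your formulation is cleaner but not substantively different.
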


\vspace{5pt}

\begin{proof}~~{\underline{Case}}~(1)\quad  $\dim~R = 0$. Then, for $q = p^n$, 
$f_n(R, I)(x) = q\ell(R_m)$ for  $x\in [m/q, m+1/q)$.
If $m_0$ is an integer such that $R_m = 0$ for 
$m\geq m_0$ then  $f_n(R, I)(x) = 0$ for $x\geq m_0/q$.
Therefore 
$${\widehat f_n}(R, I)(z)  = q\int_0^{m_0/q}f_n(R,I)(t)e^{itz}dt
 = q\sum_{m=0}^{m_0-1}\ell\Big(\frac{R}{I^{[q]}}\Big)_m\int_{m/q}^{m+1/q}e^{itz}dt.$$

\vspace{5pt}
\noindent{\bf Claim}.\quad 
$\lim_{n\to \infty} q \int_{m/q}^{m+1/q}e^{itz}dt = 1$.

\vspace{5pt}
\noindent{Proof of the claim}:\quad Let $c_0 = iz$.
Now 
$$\begin{array}{lcl}q\int_{m/q}^{m+1/q}e^{c_0t}  & = & 
\frac{q}{c_0}\left[e^{c_0(m+1)/q}-e^{c_0m/q}\right]\\\
& = & \frac{q}{c_0}\left[\frac{c_0}{q}\right]+
\frac{q}{c_0}\frac{(c_0(m+1))^2}{q^2}(A_{m+1}) - 
\frac{q}{c_0}\frac{(c_0m)^2}{q^2}(A_{m})\\\
& = & 1+c_0(m+1)^2\frac{A_{m+1}}{q} -
c_0m^2\frac{A_{m+1}}{q},\end{array}$$
where $A_l= \frac{1}{2!}+\frac{c_0l/q}{3!}+
\frac{(c_0l/q)^2}{4!}+\cdots  $.
But there exists constants $C_{m+1, z}$ and  $C_{m, z}$
depending on $m$ and $z$ such that 
$$A_{m+1} \leq e^{|c_0(m+1)|/q} \leq C_{m+1, z} \quad \mbox{and}\quad
A_{m} \leq e^{|c_0m|/q} \leq C_{m, z}.$$
Hence $\lim_{n\to \infty}q\int_{m/q}^{m+1/q}e^{c_0t} = 1$.

In particular 
$$\lim_{n\to \infty} {\widehat f_n}(R, I)(z) = 
\lim_{n\to \infty}\sum_{m=0}^{m_0-1}\ell\Big(\frac{R}{I^{[q]}}\Big)_m
=
\sum_{m=0}^{m_0-1}\ell(R_m) = 
\ell(R),~~\mbox{for all}~~z\in \C$$
and therefore $F_{R, I}$ is a constant  and 
hence  an entire function.

\vspace{5pt}

\noindent{\underline{Case}}~(2)~~~$\dim~R = 1$. 
By Lemma~\ref{l11}, 
there exist constants $\alpha = C_{s_1}$ and $M$ in $\R$ such that 
$$\cup_n(\mbox{Supp}~f_n(R,I))\cup (\mbox{Supp}~f_{R,I}) \subseteq [0, \alpha]~\mbox{and}~ 
\|f_n(R,I)(t)-f_{R,I}(t)\| \leq M.$$

Fix  $z= x+iy \in \C$ and let $\epsilon >0$. Let 
$\epsilon_1 = \epsilon/(M\cdot e^{|\alpha y|})$.
By Lemma~\ref{l11} there is an open neighborhood 
$T_{\epsilon_1}$ of the set $d_0, d_{1},\ldots, d_{s_1}$ such that 
$T_{\epsilon_1}$ is  of measure 
$< \epsilon_1 $ and 
$$|f_n(R,I)(t)-f_{R,I}(t)| = 0,~\mbox{ for}~ t\in \R\setminus T_{\epsilon_1}.$$

Then 
$$|{\widehat f_n}(R,I)(z)-{\widehat f_{R,I}}(z)|\leq   \int_{T_{\epsilon_1}} 
|f_n(R,I)(t)-f_{R,I}(t)|e^{|ty|}dt < \epsilon.$$
This 
 proves the pointwise convergence of  
the sequence  
$\{{\widehat f_n}(R,I)\}_n$ to the function ${\widehat f}_{R,I}$.
Since $f_{R,I}\in L^1([0, \alpha])$ its Fourier transform  
${\widehat f}_{R,I}$
is an entire function.

\vspace{5pt}

\noindent{\underline{Case}}~(2)~~If  $\dim~R \geq 2$ then the assertion 
follows by the same argument as above.
\end{proof}

\begin{lemma}\label{l2}The map ${\tilde \Phi}:(\sC_k, \tensor)\longto H(\C)$ given by 
$(R, I) \to F_{R, I}$ is multiplicative, that is
$F_{(R, I) \tensor (S, J)} = F_{R, I}\cdot F_{S, J}$ such that 
the identity element $(k, (0))$ of $(\sC_k, \tensor)$ maps to the identity element
$F\equiv 1$ of $H(\C)$.
\end{lemma}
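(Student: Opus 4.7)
The plan is to verify multiplicativity by an explicit Fourier computation that reduces the identity $F_{R,I}\cdot F_{S,J} = F_{(R,I)\tensor (S,J)}$ to the elementary limit $(e^{w}-1)/w\to 1$ as $w\to 0$. First I would sort out the Hilbert-function combinatorics of the tensor product. Interpreting the ideal of $(R,I)\tensor(S,J)$ as $\mathfrak{I} = I(R\tensor_k S)+(R\tensor_k S)J$, one has $(R\tensor S)/\mathfrak{I}^{[q]} \cong R/I^{[q]}\tensor_k S/J^{[q]}$ (since Frobenius powers distribute over sums of extended ideals), so
$$\ell\bigl((R\tensor S)/\mathfrak{I}^{[q]}\bigr)_{N}
=\sum_{m=0}^{N} \ell\bigl(R/I^{[q]}\bigr)_{m}\,\ell\bigl(S/J^{[q]}\bigr)_{N-m},$$
and $\dim(R\tensor_k S)=\dim R+\dim S$; this is the discrete convolution that will match the product of the Fourier transforms.

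Next, since $f_n(R,I)$ is a step function taking the constant value $\ell(R/I^{[q]})_m/q^{d_R-1}$ on each interval $[m/q,(m+1)/q)$, I would evaluate the Fourier integral on each such interval using $\int_{m/q}^{(m+1)/q}e^{itz}\,dt = e^{imz/q}(e^{iz/q}-1)/(iz)$ to obtain
$$\widehat{f_n(R,I)}(z) = \frac{e^{iz/q}-1}{iz\,q^{d_R-1}}\sum_{m}\ell(R/I^{[q]})_{m}\,e^{imz/q},$$
a finite sum, and analogously for $(S,J)$ and for $(R,I)\tensor(S,J)$ with $d_R$ replaced by $d_S$ respectively $d_R+d_S$. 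Multiplying $\widehat{f_n(R,I)}(z)\cdot\widehat{f_n(S,J)}(z)$ and collapsing the double sum via the Cauchy-product identity from Step~1, a direct algebraic manipulation gives the clean relation
$$\widehat{f_n(R,I)}(z)\cdot\widehat{f_n(S,J)}(z) = \frac{e^{iz/q}-1}{iz/q}\cdot\widehat{f_n\bigl((R,I)\tensor(S,J)\bigr)}(z),$$
valid for every $z\in\C$ and every $q=p^n$.

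Finally, letting $n\to\infty$, the prefactor $(e^{iz/q}-1)/(iz/q)$ tends to $1$ pointwise on $\C$, and combined with the pointwise convergence $\widehat{f_n(\cdot,\cdot)}\to F_{\cdot,\cdot}$ supplied by Lemma~\ref{l1} this yields $F_{R,I}(z)\cdot F_{S,J}(z)=F_{(R,I)\tensor(S,J)}(z)$. For the identity element $(k,(0))$ of $(\sC_k,\tensor)$, a direct computation is enough: $\dim k=0$ and $f_n(k,(0))=q$ on $[0,1/q)$ and $0$ elsewhere, whence $\widehat{f_n(k,(0))}(z)=(e^{iz/q}-1)/(iz/q)\to 1$, so $F_{k,(0)}\equiv 1\in H(\C)$ is the multiplicative unit.

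The main obstacle is really just the bookkeeping of the first step: one must recognize that the ``tensor product of ideals'' attached to an SG pair is the extension ideal $I(R\tensor S)+(R\tensor S)J$ (not the naive subspace $\oplus_i I_i\tensor J_{n-i}$), and verify that its Frobenius powers behave well enough that the graded length is a Cauchy convolution. Once that is in place, the Fourier identity in Step~2 is a one-line algebraic rearrangement of geometric sums, and the passage to the limit in Step~3 reduces to the single scalar limit $(e^{w}-1)/w\to 1$.
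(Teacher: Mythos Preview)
Your argument is correct and takes a genuinely different route from the paper's proof. The paper proceeds by a case analysis on $(\dim R,\dim S)$ and, in the main case $\dim R,\dim S\geq 1$, works at the level of the density functions: it shows that $f_n(R\tensor S,I\tensor J)$ is asymptotically the convolution $f_n(R,I)\ast f_n(S,J)$, that this convolution converges to $f_{R,I}\ast f_{S,J}$, and only then applies the Fourier transform. This forces a careful treatment of the almost-everywhere convergence in dimension~$1$ (the sets $T_{R,I}$, $T_{S,J}$ from Lemma~\ref{l11}, and the auxiliary set $M_{x,q}$). By contrast, you bypass the density functions entirely and establish the exact finite-level identity
$$\widehat{f_n(R,I)}(z)\cdot\widehat{f_n(S,J)}(z)=\frac{e^{iz/q}-1}{iz/q}\,\widehat{f_n\bigl((R,I)\tensor(S,J)\bigr)}(z),$$
which holds for every $n$ and every $z$, and then pass to the limit using only Lemma~\ref{l1} and the scalar limit $(e^{w}-1)/w\to 1$. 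This is cleaner: it eliminates the dimension case-split, avoids all pointwise convergence estimates for the $f_n$ themselves, and makes the role of the discrete Cauchy convolution transparent. The paper's approach, on the other hand, yields the additional information $f_{R\tensor S,I\tensor J}=f_{R,I}\ast f_{S,J}$ at the level of density functions, which your Fourier-side argument does not directly produce. Your remark that the ideal in $(R,I)\tensor(S,J)$ must be read as the extension ideal $I(R\tensor_k S)+(R\tensor_k S)J$ is well taken; the paper's notation $I\tensor_k J=\oplus_i(I_i\tensor_k J_{n-i})$ is ambiguous, but its own computations (e.g.\ in Case~(2)) confirm that the intended quotient satisfies $(R\tensor S)/\mathfrak I^{[q]}\cong R/I^{[q]}\tensor_k S/J^{[q]}$, exactly as you use.
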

\begin{proof}
\noindent{\bf Case}~(1)\quad Let $\dim R = 0$ and $\dim S =0$, then 
$\dim R\tensor S =0$. By Lemma~\ref{l1}, for all $z\in \C$
$$F_{R\tensor S, I\tensor J}(z) = \ell(R\tensor S) =
\ell(R)\ell(S) = F_{R, I}(z)\cdot F_{S,J}(z).$$ 
 
\vspace{5pt}

\noindent{\bf Case}~(2)\quad Let $\dim R = 0$ and $\dim S \geq 1$. Then
 $R = R_0\oplus R_1\oplus \cdots \oplus R_{n_0}$, for some $n_0$ and 
$\dim (R\tensor S)\geq 1$. 
First we show that 
$f_{R\tensor S, I\tensor J} = \ell(R)\cdot f_{S, J}$.

 Let $T_{S, J}$ be the finite set for the pair $(S, J)$, as  in Lemma~\ref{l11}.
Let us fix  $x\in \R_{>0}\setminus T_{S, J}$. 
 We can choose $q\gg 0$ such that 
$\lfloor xq \rfloor > n_0$ and $I^{[q]} = 0$.
Further we assume that 
 the points 
$x, x-(1/q), \ldots, x-(n_0/q)$  avoid the set $T_{S, J}$. 
Then

\begin{multline*}
{f}_{R\tensor S, I\tensor J}(x) = \lim_{n\to \infty} f_n(R\tensor S, I\tensor J)(x) 
= \lim_{n\to \infty} \sum_{i=0}^{n_0}\ell(R_i)\ell(S/J^{[q]})_{{\lfloor xq \rfloor}-i}\\\
 = \sum_{i=0}^{n_0}\ell(R_i)\lim_{n\to \infty}
f_n(S, J)\big(x-\tfrac{i}{q}\big) = \ell(R)\cdot f_{S,J}(x).\end{multline*}

Applying  Fourier transform functor on both the sides we get
 $$F_{R\tensor S, I\tensor J}(z) = 
{\widehat f}_{R\tensor S, I\tensor J}(z) = \ell(R)\cdot {\widehat f}_{S, J}(z) = 
F_{R, I}(z)\cdot F_{S,J}(z).$$

\vspace{5pt}

\noindent{\bf Case}~(3)\quad Let $\dim R \geq 1$ and $\dim S \geq 1$.
For the sake of abbreviation we write $f_n(R, I)= f_n $,
$f_n(S, J) =  g_n$.

\vspace{5pt}

\noindent{\bf Claim}.\quad For every $x\in \R$

\begin{enumerate}
\item[a]
$\lim_{n\to \infty}(f_n * g_n)(x) \longto 
(f_{R, I}*f_{S, J})(x)$,  
\item[b] $\lim_{n\to \infty}f_n(R\tensor S, I\tensor J)(x) 
 \lim_{n\to \infty}(f_n * g_n)(x)$.
\end{enumerate}

If we assume  the proof of the claim for the moment then applying the 
Fourier transform we get 
$$F_{R\tensor S, I\tensor J} = {\widehat f_{R\tensor S, I\tensor J}}
= {\widehat (\lim_{n\to \infty}f_n(R\tensor S, I\tensor J))}
= {\widehat {f_{R,I} * f_{S, J}}} = {F_{R,I}}\cdot {F_{S,J}}.$$

\vspace{5pt}

\noindent{\underline{Proof of the claim}~[a]}:\quad
By  Lemma~\ref{l11},  there exist finite sets $T_{R,I}$ and 
$T_{S, J}$ (possibly empty) in $\R$ such that the sequences 
 $\{f_n\}$ and $\{g_n\}$ converge 
uniformly to $f_{R, I}$ and $f_{S, J}$ respectively outside a set 
of arbitrarily small measure containing $T_{R, I}$ and $T_{S, J}$.

Moreover $f_{R, I}$ is a continuous function outside a set of 
measure $0$ (hence so is the function $t\to f_{R, I}(x-t)$ for a fixed $x$).
Let 
$$M = \mbox{max}~\{||f_n||, ||g_n||, ||f_{R, I}||, ||f_{S,J}||\mid n\in \N\}.$$
Fix an $x\in \R$. Let $\epsilon > 0$ be any number.
We choose a  set $U$ of the measure $\leq \epsilon/4M^2$ such 
that $\{f_n\}$ and $\{g_n\}$ converge outside $U$.
Further we choose $n_0\geq 0$ such that  for $n\geq n_0$ 
$$|g_n(t)-f_{S, J}(t)| <\epsilon/4|x|M\quad \mbox{and}\quad 
|f_n(x-t)-f_{R, I}(x-t)| <\epsilon/4|x|M
\quad \mbox{for}\quad  t\in \R\setminus U.$$
Now 
\begin{multline*}|(f_n * g_n)(x) - (f_{R, I} * g_{S, J})(x)| \leq   
\int_0^x |f_n(x-t)||g_n(t)-f_{S, J}(t)|dt\\\
 + \int_0^{x} |f_n(x-t)|-f_{R, I}(x-t)||f_{S, J}(t)|dt
+\int_U|f_n(x-t)g_n(t)|dt +  \int_U|f_{R, I}(x-t)f_{S, J}(t)|dt\\\
< |x|\cdot M \cdot \frac{\epsilon}{4|x|M} +  
|x| \cdot \frac{\epsilon}{4|x|M} \cdot M + M^2\cdot \frac{\epsilon}{4M^2} + 
M^2\cdot \frac{\epsilon}{4M^2} = \epsilon. \end{multline*}

\vspace{5pt}

\noindent{\underline{Proof of the claim}~[b]}:\quad
Now $\dim(R\tensor S)\geq 2$ implies that $\{f_n(R\tensor S, I\tensor J)\}_n$
converges to   $f_{R\tensor S, I\tensor J}$ uniformly, where 
$$f_n(R\tensor S, I\tensor J) = 
 = \frac{1}{q^{d_1+d_2-1}}
\Big(\sum_{l=0}^{\lfloor xq \rfloor}\ell\big(\frac{R}{I^{[q]}}\big)_l\cdot 
\ell\big(\frac{S}{J^{[q]}}\big)_{\lfloor xq \rfloor-l}\Big)\\\\\
  = \frac{1}{q}\sum_{l=0}^{\lfloor xq \rfloor}
f_n\big(\tfrac{l}{q}\big)g_n\big(\tfrac{\lfloor xq \rfloor-l}{q}\big).$$

On the other hand, let $T_{S, J} = \{d_0, \ldots, d_{s_1}\}
$ be the finite set as in 
 Lemma~\ref{l11}. Let 

$$M_{x,q} = \big\{l\in \Z\mid  \tfrac{\lfloor xq \rfloor}{q} -\tfrac{l+1}{q}
\quad{and}\quad 
 \tfrac{\lfloor xq \rfloor}{q} -\tfrac{l}{q}\not\in
\{d_i, \ldots d_i+\tfrac{m_0+1}{q}\},\quad\mbox{for any common}\quad d_i\big\}.$$
It is easy to check that the cardinality of $M_{x, q} \leq (m_0+1)d_{s_1}+1$,
and for any $l\not\in M_{x,q}$  
$$g_n\big({\lfloor xq \rfloor}/{q} -({l+1})/{q}\big) = 
g_n\big({\lfloor xq \rfloor}/{q} -{l}/{q}\big).$$
We can write 
\begin{multline*}
(f_n\ast g_n)(x) = \int_0^xf_n(t)g_n(x-t)dt = 
\sum_{\{0\leq l \leq \tfrac{\lfloor xq \rfloor-1}{q}\mid l\not\in M_{xq}\}}
\int_{l/q}^{(l+1)/q}f_n(t)g_n(x-t)dt\\\ 
+ \int_{\frac{\lfloor xq \rfloor}{q}}^x f_n(t)g_n(x-t)dt + 
\sum_{l\in M_{x,q}}\int_{l/q}^{(l+1)/q}f_n(t)g_(x-t)dt. \end{multline*}
But 
$$\begin{array}{lcl}
\int_{l/q}^{(l+1)/q}f_n(t)g_n(x-t)dt & = 
 & f_n\big(\tfrac{l}{q}\big)
g_n\big(\tfrac{\lfloor xq \rfloor-l-1}{q}\big) = f_n\big(\tfrac{l}{q}\big)
g_n\big(\tfrac{\lfloor xq \rfloor-l}{q}\big)\quad 
\mbox{if}\;\;l\not\in M_{x, q}\\\\
 & \leq & M^2/q\quad \mbox{for any}\;l\end{array}$$
Hence, 
$$|f_n(R\tensor S, I\tensor J)(x) - 
\big(f_n\ast g_n)(x)|\leq \big((m_0+1)d_{s_1}+2\big)\cdot \frac{M^2}{q}.$$
This proves the claim~[b] and hence the lemma.
\end{proof}

\begin{rmk}
The case when both  $(R, I)$ and $(S, J)$ are of dimension $\geq 2$ the 
above lemma can also be deduced from  the thesis of M. Mondal ([MM]) and Lemma~\ref{l1}.
\end{rmk}

\begin{rmk}\label{r1} \begin{enumerate}
\item By Lemma~\ref{l2} the map  
${\tilde \Phi}:\sC_k \longto H(\C)$ is a multiplicative map of monoids 
which takes  the identity $(k, (0))$ of the monoid $\sC_k$ to the multiplicative 
identity of $H(\C)$. 
 If  $F:L^1(\R)\longto H(\C)$  denotes the Fourier transform functor
then  $Im(F)\cap Im({\tilde \Phi})$ is closed under multiplication 
and contains whole of $Im({\tilde \Phi})$ except ${\tilde \Phi}(R, I)$, 
where $\dim~R = 0$.

This is because if $f\in L^1(\R)$ 
then the Fourier transform ${\widehat f}\mid_{\R}(x)$   
 tends to $0$ as $x\to \infty$ (Theorem~9.6 in [R]), whereas 
${\tilde \Phi}(R, I) = F_{R, I}$ is a nonzero
constant function everywhere. However 
${\tilde \Phi}(R, I) = \lim_{n\to \infty}{\widehat f_n(R, I)}$, where 
 ${\widehat f_n}(R, I)\in \mbox{Im} (F)$ and $f_n(R, I)$ are the Dirac 
functions.

This is 
analogous to the case where the  set $C_c(\R)$ of 
compactly supported continuous 
function on $\R$, by definition is the intersection of the set of compactly supported 
function and  the set of continuous functions on $\R$.
This set is  
closed under the convolution operation. But the identity 
element or any nonzero constant maps  is the limit of  Dirac
functions in $C_c(\R)$ does not belong to $C_c(\R)$.
\end{enumerate}
\end{rmk}

For given SG pair $(R,I)$, the function $F_{R, I}$ is 
{\em additive} for maximal dimension  components of $\Spec~R$:
 if $\Lambda = \{p\in \Spec~R\mid \dim R/p = \dim R\}$ then 
$$F_{R, I} = \sum_{p\in \Lambda} \lambda(R_p)F_{R/p, I+p/p},$$
which follows  by Lemma~\ref{l1} (in case $\dim R = 1$) 
and by Proposition~2.14 of [T] (in case $\dim R \geq 2$).

However the formulation ignors lower dimensional components of $R$.
Here   we   extend the definition of
${\tilde \Phi}:(\sC_k, \tensor)\longto H(\C)$ 
to a multiplicative map of monoids
 $\sC_k \longto H(\C)[X]$ which keeps track of every 
irreducible components of $\Spec R$.

\begin{notations} For a SG pair $(R,I)$ of dimension $d$, we write  
$$\{\mbox{minimal primes of $R$}\} = {\tilde P_1} \cup {\tilde P_2} \cup 
\cdots{\tilde P_d},$$
where ${\tilde P_i} = 
\{{\bf p}_{i1}, \ldots, {\bf p}_{ij_i}\in \Spec R\mid \dim~R/{\bf p}_{ij} = i\}.$
 Let $Q_{ij}$  denote the ${\bf p}_{ij}$-primary component of $R$. Then 
$R^i = R/(Q_{i1}\cap \cdots \cap Q_{ij_i})$ is the union of $i$-dimensional 
components of $R$. 

By Lemma~\ref{l11}
$$F_{R^i, IR^i} = \sum_{j=1}^{j_i} \ell(R_{{\bf p}_{ij}})
F_{R/{\bf p}_{ij}, I+{\bf p}_{ij}/{\bf p}_{ij}}.$$
\end{notations}

\begin{propose}\label{p1}The map $\Pi: \sC_k \longto H(\C)[X]$ given by 
$$(R, I) \to F_{R^0, IR^0} + F_{R^1, IR^1}X + \cdots + F_{R^d, IR^d}X^d$$
 is multiplicative.
\end{propose}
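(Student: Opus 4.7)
The plan is to verify multiplicativity coefficient by coefficient in $X$. Writing $d_R=\dim R$ and $d_S=\dim S$, the coefficient of $X^n$ in the product $\Pi(R,I)\cdot\Pi(S,J)$ is $\sum_{i+j=n}F_{R^i,IR^i}\cdot F_{S^j,JS^j}$, while the coefficient of $X^n$ in $\Pi((R,I)\otimes(S,J))$ is $F_{(R\otimes S)^n,(I\otimes J)(R\otimes S)^n}$. So the entire proposition reduces to proving, for every $n\in\{0,1,\ldots,d_R+d_S\}$, the identity
$$F_{(R\otimes S)^n,\,(I\otimes J)(R\otimes S)^n}\;=\;\sum_{i+j=n} F_{R^i,IR^i}\cdot F_{S^j,JS^j}.$$

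The geometric input I would use is that, over an algebraically closed field $k$, the tensor product of two affine $k$-domains is again a domain. Hence the minimal primes of $R\otimes_k S$ are in bijection with pairs $({\bf p},{\bf q})$ of minimal primes of $R$ and $S$, via $P_{{\bf p},{\bf q}}:=\ker\bigl(R\otimes_k S\twoheadrightarrow R/{\bf p}\otimes_k S/{\bf q}\bigr)$, and $\dim(R\otimes S)/P_{{\bf p},{\bf q}} = \dim R/{\bf p} + \dim S/{\bf q}$. Therefore $(R\otimes S)^n$ is the quotient of $R\otimes_k S$ by the intersection of the primary components at those $P_{{\bf p},{\bf q}}$ satisfying $\dim R/{\bf p}+\dim S/{\bf q}=n$. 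Applying the additivity formula displayed in the Notations block preceding the proposition separately to the SG pairs $(R^i,IR^i)$, $(S^j,JS^j)$, and $((R\otimes S)^n,\ldots)$, I would rewrite the left side as
$$\sum_{i+j=n}\;\sum_{{\bf p}\in\tilde P_i}\sum_{{\bf q}\in\tilde P_j}\ell\bigl((R\otimes_k S)_{P_{{\bf p},{\bf q}}}\bigr)\cdot F_{(R\otimes S)/P_{{\bf p},{\bf q}},\,\ldots}.$$
Two standard identifications then collapse this into a product: the length formula $\ell((R\otimes_k S)_{P_{{\bf p},{\bf q}}}) = \ell(R_{\bf p})\cdot\ell(S_{\bf q})$ (which follows from localizing the surjection and flatness of tensor product with a field), and the ring-level identification $(R\otimes_k S)/P_{{\bf p},{\bf q}}\cong R/{\bf p}\otimes_k S/{\bf q}$, under which Lemma~\ref{l2} yields $F_{(R\otimes S)/P_{{\bf p},{\bf q}},\ldots}=F_{R/{\bf p},\ldots}\cdot F_{S/{\bf q},\ldots}$. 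Substituting and factoring the double inner sum over ${\bf p}$ and ${\bf q}$ separately produces exactly $\sum_{i+j=n}F_{R^i,IR^i}\cdot F_{S^j,JS^j}$, as needed.

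The main obstacle is the geometric step: producing the clean description of $\mathrm{Min}(R\otimes_k S)$ and the multiplicative behaviour of the localization lengths. Both statements fail over non-algebraically-closed fields and rely essentially on the standing hypothesis on $k$. A secondary but necessary check is that under the quotient $R\otimes_k S\to R/{\bf p}\otimes_k S/{\bf q}$ the ideal $(I\otimes J)(R\otimes S)$ is carried to the ideal of the SG pair $(R/{\bf p},(I+{\bf p})/{\bf p})\otimes(S/{\bf q},(J+{\bf q})/{\bf q})$ to which Lemma~\ref{l2} is being applied, so that the HK density data on both sides refer to the same invariants. Preservation of the identity element is automatic: for $(R,I)=(k,(0))$ one has $d=0$, $R^0=k$, and by Case~(1) of the proof of Lemma~\ref{l1}, $F_{k,(0)}\equiv 1$, so $\Pi(k,(0))=1\in H(\C)[X]$, consistent with multiplicativity.
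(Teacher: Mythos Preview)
Your proposal is correct and follows essentially the same route as the paper: both arguments reduce to the claim that the minimal primes of $R\otimes_k S$ are exactly the $P_{{\bf p},{\bf q}}$ with ${\bf p}\in\mathrm{Min}(R)$, ${\bf q}\in\mathrm{Min}(S)$, together with the length identity $\ell((R\otimes_k S)_{P_{{\bf p},{\bf q}}})=\ell(R_{\bf p})\ell(S_{\bf q})$, and then combine the additivity formula from the Notations with Lemma~\ref{l2} applied to the domain quotients. The paper proves the length identity via the Cohen structure theorem and a filtration argument rather than the flatness remark you sketch, but the structure of the two proofs is otherwise the same.
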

\begin{proof}First we prove the following 

\vspace{5pt}

\noindent{\bf Claim}.\quad Let $(R, I)$ and $(S, J)$ be in $\sC_k$.
Then 
$$\mbox{min prime}(R\tensor S) = \{{\bf p}\tensor S+
R\tensor {\bf q}\mid {\bf p}\in \mbox{min prime}(R),~{\bf q}\in 
\mbox{min prime}(S)\}.$$ 
Further if  ${\bf p}\tensor S + R\tensor {\bf q}\in 
\mbox{min prime}~(R\tensor S)$ then  
$\ell(R_{\bf p})\ell(S_{\bf q}) = \ell((R\tensor S)_P)$, where 
$P = {\bf p}\tensor S + R\tensor {\bf q}$.

\vspace{5pt}

\noindent{\underline{Proof of the claim}}:\quad 
Let  ${\bf p}\in \Spec R$ and
${\bf q}\in \Spec S$.
Since $k$ is algebraically closed (see Ex. 3.15 of [H])

$$\frac{R\tensor S}{{\bf p}\tensor S+R\tensor {\bf q}} \simeq 
{R}/{{\bf p}}\tensor_k{S}/{{\bf q}},$$
is an integral  domain, and therefore
${\bf p}\tensor S+R\tensor {\bf q}$ is a prime ideal of $R\tensor S$.
 
On the other hand,
if $P\in \Spec R\tensor S$ then 
$\phi_R:R\to (R\tensor S)/P$ given by $r\to r\tensor 1$
and $\phi_S:S\to (R\tensor S)/P$ given by $s\to 1\tensor s$ are ring homomorphisms. 
Therefore 
${\bf p} = \ker(\phi_R)$ 
and ${\bf q} = \ker(\phi_S)$ are the prime ideals of $R$ and $S$ respectively such 
that 
 $P\supseteq {\bf p}\tensor S + R\tensor {\bf q}$.
In particular 
$$\mbox{min prime} (R\tensor S) \subseteq \{{\bf p}\tensor S+R\tensor {\bf q}\mid {\bf p}\in 
\Spec R,~~{\bf q}\in \Spec S\}\subseteq \Spec (R\tensor S)$$
which implies the first assertion of the claim.

Now consider  $P = {\bf p}\tensor R+ R\tensor {\bf q} \in \mbox{min prime}~(R\tensor S)$.
Then  $r\in R\setminus {\bf p}$ and $s\in S\setminus  {\bf q}$ 
implies $r\tensor 1$ and 
$1\tensor s$ are in $(R\tensor S)\setminus P$.

So if $T_1 = \{r\tensor s\mid r\in R\setminus {\bf p}~\mbox{and}~ s\in S
\setminus {\bf q}\}$ 
and $T= (R\tensor S)\setminus P$ then $T_1^{-1}\subseteq T$  
$$R_{\bf p}\tensor S_{\bf q} = T_1^{-1}(R\tensor S)\quad\mbox{and}\quad
  T^{-1}(R\tensor S) = (R\tensor S)_P.$$
Similarly, if $k({\bf p})$, $k({\bf q})$ and  $k(P)$ are
 the residue fields of ${\bf p}$, 
${\bf q}$ and $P$ respectively
then 
$$ k({\bf p})\tensor_k k({\bf q}) = T_1^{-1}\Big(\frac{R}{{\bf p}}\tensor_k 
\frac{S}{{\bf q}}\Big) = T_1^{-1}\Big(\frac{R\tensor_kS}{P}\Big)\quad\mbox{and}\quad
T^{-1}\Big(\frac{R\tensor_kS}{P}\Big) = k(P).$$

Hence $k(P) = T^{-1}(k({\bf p})\tensor_k k({\bf q}))$.
By  Cohen structure theorem, $k({\bf p})\subset R_{\bf p}$, $k({\bf q})\subset
S_{\bf q}$ and $k(P)\subset (R\tensor S)$.
Now if  
$\ell_{k({\bf p})}(R_{\bf p}) = m_1$,
and $\ell_{k({\bf q})}(S_{\bf q}) = m_2$
then 
we have 
$$0 = M_0 \subset M_1\subset M_2 \subset \cdots \subset M_{m_1m_2} =
R_{\bf p}\tensor_k S_{\bf q},$$
where $M_i/M_{i-1} \simeq k({\bf p})\tensor_kk({\bf q})$. But then 
$$0 = T^{-1}M_0 \subset T^{-1}M_1\subset T^{-1}M_2 \subset \cdots \subset 
T^{-1}M_{m_1m_2} =
T^{-1}(R_{\bf p}\tensor_k S_{\bf q}) = (R\tensor_kS)_P,$$
where $T^{-1}(M_i/M_{i-1}) = k(P)$.
This proves the second part of the claim.

\vspace{5pt}

Let $(R, I)$ and $(S, J)$ be two SG pairs in $\sC_k$.
Let 
$${\tilde P_l} = 
\{P_{lj}\in \mbox{min prime}(R\tensor_k S)\mid \dim(R\tensor S)/P_{lj} = l\}.$$
Similarly we can define the subsets 
${\tilde {\bf p}}_{l_1}\subset \mbox{min prime}(R)$ and
${\tilde {\bf q}}_{l_2}\subset \mbox{min prime}(S)$.
We have canonical bijection 
$${\tilde P_l} \leftrightarrow {\tilde {\bf p}}_{l_1}\times
{\tilde {\bf q}}_{l_2}\quad  \mbox{where}\quad 
P_{lj}\to ({\bf p}_{l_1j}, {\bf q}_{l_2j})$$
 if $P_{lj} = {\bf p}_{l_1j}\tensor S+R\tensor {\bf q}_{l_2j}$,  It is obvious that 
$l_1+l_2 = l$ as $\dim R/{\bf p}_{lj} + \dim S/{\bf q}_{lj} = l$.
By Lemma~\ref{l1}, 
$$\Phi_{\frac{R\tensor S}{P_{lj}}, \frac{I\tensor J+P_{lj}}{P_{lj}}} = 
\Phi_{\frac{R}{{\bf p}_{lj}}, \frac{I+{\bf p}_{lj}}{{\bf p}_{lj}}}\cdot 
\Phi_{\frac{S}{{\bf q}_{lj}}, \frac{I+{\bf q}_{lj}}{{\bf q}_{lj}}}.$$
Therefore 
$$\Phi_{(R\tensor S)^l, (I\tensor J)(R\tensor S)^l} =
\sum_{P_{lj}\in {\tilde P_l}}\ell(R\tensor S)_{P_{lj}}
\Phi_{\frac{R\tensor S}{P_{lj}}, \frac{I\tensor J+P_{lj}}{P_{lj}}}
 = \sum_{l_1+l_2 =l}\Phi_{R^{l_1}, IR^{l_1}}\cdot \Phi_{S^{l_2}, JS^{l_2}} .$$

Hence $\Pi(R\tensor S, I\tensor J) = \Pi(R, I)\cdot \Pi(S,J)$.

\end{proof}

 We proceed to extend the monoid $(\sC_k, \tensor)$ to a semi ring 
$(\{\mbox{graded pairs}\}/\equiv, \tensor, {\tilde \oplus})$ as follows:
by a graded pair $(R,I)$ we mean 
$(R,I)$ is a  finite sum of SG pairs,
where for two SG pairs 
$(R,I)$ and $(S,J)\in \sC_k$ we define the addition of pairs as
(where  $(R\oplus S)_n = R_n\oplus S_n$ and $(I\oplus J)_n = I_n\oplus J_n$) 
$$(R,I){\tilde \oplus}(S,J) = (R\oplus S, I\oplus J),$$
with addition and multiplication as  the pointwise  addition and the pointwise 
multiplication resply. In other words we are considering the direct product 
of rings. We canonically extend this addition operation to finitely many
sg pairs, and multiplication $\tensor$ as before.
Also
 $(R, I) \equiv (S,J)$ if there is a  degree $0$ graded isomorphism 
$\eta:R\longrightarrow
S$ of rings such that $\eta(I) = J$.

Following lemma gives the 
 uniqueness of the decomposition of a graded pairs into the SG pairs.

\begin{lemma}\label{l0}Let $(R, I)$ and $(S, J)$ be graded pairs such that 
$$(R, I) = {\tilde \oplus}_i {\tilde \oplus}^{m_i}(R^i, I^i)~\mbox{ and}~ 
(S, J) = {\tilde \oplus}_j{\tilde \oplus}^{n_j} (S^j, J^j),$$
 where $\{(R^i, I^i)\}_i$ and $\{(S^j,J^j)\}_j$ are 
two  finite sets of 
distinct SG 
pairs. If  there is an isomorphism of graded pairs
$\eta:(R, I)\simeq (S, J)$ then
for every $i$ there is $j_i$ such that $(R^i,I^i)\simeq (S^j,J^j)$ 
and $m_i = n_{j_i}$.

In particular, the decomposition of a graded pair into a finite sum of  sg pairs 
is unique up to an isomorphism of graded pairs.
\end{lemma}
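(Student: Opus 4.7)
The plan is to recover the decomposition from the idempotent structure of the degree-zero component. Since each $(R^i, I^i)$ is an SG pair, $R^i_0 = k$, so the degree-zero part of the graded ring $R$ becomes $R_0 \simeq k^{M}$, where $M = \sum_i m_i$, and similarly $S_0 \simeq k^{N}$ with $N = \sum_j n_j$. The primitive idempotents of such a finite product of copies of $k$ are exactly the coordinate projections, and in the direct-sum decomposition of $R$ (respectively $S$) each such primitive idempotent $e$ corresponds to exactly one summand copy, with $eR$ equal to that copy (some $R^i$) and $eI = I \cap eR$ equal to the corresponding $I^i$.

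First I would observe that any degree-$0$ graded isomorphism $\eta \colon R \to S$ restricts to a $k$-algebra isomorphism $\eta_0 \colon R_0 \to S_0$. Since the primitive idempotents of a product of fields form the unique minimal orthogonal decomposition of $1$, $\eta_0$ must permute them, inducing a bijection $\sigma$ between the $M$ summand copies of $R$ and the $N$ summand copies of $S$ (forcing $M = N$). For each primitive idempotent $e \in R_0$, the restriction $\eta\colon eR \to \eta(e)S$ is an isomorphism of standard graded $k$-algebras, and the condition $\eta(I) = J$ yields $\eta(eI) = \eta(e)J$; thus the SG pair attached to $e$ is isomorphic to the SG pair attached to $\eta(e)$.

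Next I would use the assumed pairwise non-isomorphism within each family. If some copy of $R^i$ is sent by $\sigma$ to a copy of $S^{j}$, then the associated SG pairs are isomorphic, so every copy of $R^i$ must in fact be sent to copies of the same $S^{j}$ (otherwise two distinct $S^{j}$'s would both be isomorphic to $R^i$, contradicting the distinctness hypothesis on the $\{(S^j, J^j)\}$). This defines an injection $i \mapsto j_i$ between the indexing sets, and running the same reasoning with $\eta^{-1}$ shows it is a bijection, with $(R^i, I^i) \simeq (S^{j_i}, J^{j_i})$. Counting the primitive idempotents on each side then gives $m_i = n_{j_i}$.

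The only real obstacle is the verification that the primitive idempotents of $R_0$ correspond exactly to the summand copies of SG pairs appearing in the decomposition; this is where the SG-pair hypothesis $R^i_0 = k$ is essential, since it ensures each $R^i$ contributes a single indecomposable idempotent in degree $0$. Once this is in place the rest is linear algebra over $k$. The ``in particular'' uniqueness statement then follows by specializing to $(S, J) = (R, I)$.
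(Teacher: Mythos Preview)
Your proof is correct and follows essentially the same route as the paper: both arguments identify the summand copies with the primitive (the paper says ``irreducible'') idempotents of the ring, observe that a graded ring isomorphism must permute these, and conclude that each summand maps isomorphically to a summand on the other side. The only cosmetic differences are that the paper phrases things in terms of idempotents of $R$ rather than of $R_0$ (these coincide, since idempotents in an $\N$-graded ring live in degree~$0$), and finishes by induction on the number of summands rather than by your direct counting argument with $\sigma$. One small point: calling $\eta_0$ a ``$k$-algebra isomorphism'' is slightly more than the paper's definition of graded-pair isomorphism guarantees, but you only use that $\eta_0$ is a ring isomorphism permuting primitive idempotents, so this does not affect the argument.
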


\begin{proof}The map $\eta$  gives  the isomorphism of rings 
$\eta:R\longrightarrow S$, which 
 induces the  homeomorphism
$\eta^*:\oplus_j\oplus^{n_j}\Spec S^j\longto \oplus_i\oplus^{m_i}\Spec R^i$.
Note that each $\Spec~R^i$ and $\Spec~S^j$ is a  connected set (being spectrum of a 
standard graded ring) and hence, if $m= \sum_im_i$ and $n= \sum_j n_j$ then 
$m=n$.

We rewrite $R = \oplus_{i=1}^m R^i$ and $S = \oplus_{j=1}^mS^j$, where 
$\{R^i\}$ and $\{S^j\}$ need not be sets of distinct elements.
Here we identify $R^i$ as $(0, \ldots, R^i, \ldots, 0)\subset {\tilde R}$ 
(similarly for $S^j$).

Now we prove that  the map 
$\eta\mid_{R^1}:R^1 \simeq S^j$ is an 
isomorphism, for some $j$.

Note that 
an idempotent of $R$ is the sum of the idempotents of $R^i$.
Hence   the ring $R$ has precisely $m$
 nontrivial irreducible idempotents ({\em i.e.}, the idempotent which can not 
be written as a sum of two nontrivial idempotents) namely
$$\{e_i := (a_1,\ldots, a_m)\mid 
  a_i =1~~\mbox{and}~~ a_j= 0~~\mbox{for}~ j\neq i\}.$$
Similarly $S$ has 
precisely $m$
 nontrivial irreducible idempotents say $\{f_i\}_i$.
Since the (irreducible) idempotents map to the (irreducible) idempotents, for $e_1$ 
there is $f_j$ such that $\eta(e_1) = f_j$, for some $j$.    
This gives $\eta(R_1) = \eta(Re_1) = Sf_j = S^j$. Therefore the 
induced map $\eta:R^1\longto S^j$ is an isomorphism.

Also $\eta(I^1) \subseteq S^j\cap (J^1\oplus \cdots \oplus J^m) = J^j$.
Therefore $\eta(R^1, I^1) \simeq (S^j, J^j)$. 
This gives an isomorphism 
$$\eta:{\tilde \oplus}_{ \{2\leq i\leq m\}}(R^i, I^i) 
\longrightarrow {\tilde \oplus}_{\{1\leq i \leq m\mid i\neq j\}}(S^j, J^j).$$ 
Now the proof follows by induction on $m$.
\end{proof}

Note that the  monoid $(\sC_k, \tensor)$ also gives the corresponding 
commutative semi ring
 $(\N[\sC_k], *, +)$, where  as a set  
 $$\N[\sC_k] = \{\sum_{P_m\in \sC_k}r_mP_m \mid
r_m\in \N~~\mbox{are all zero except for finitely many},~~P_m \in \sC_k \},$$
and where the addition $+$ is a formal sum of elements of $\sC_k$ given by 
$$\Big(\sum_{P_m\in \sC_k}r_m P_m\Big)  + \Big(\sum_{P_m\in 
\sC_k}r'_m P_m\Big) = \sum_{P_m\in\sC_k}\Big(r_m+r'_m\Big) P_m$$
 and the multiplication is 
$$\Big(\sum_{P_m\in\sC_k}r_m P_m \Big) *
\Big(\sum_{P_m\in \sC_k}r'_m
 P_m\Big) = \sum_{P_{\tilde m} \in \sC_k}
\Big(\sum_{P_m P_n = P_{\tilde m}}r_mr'_n\Big)
 P_{\tilde m}.$$

By the uniquness of the decomposition of graded pairs 
(Lemma~\ref{l0})
there is an isomorphism
$$(\N[\sC_k], *, +) \simeq  (\{\mbox{graded pairs}\}/\equiv, \tensor, {\tilde \oplus})$$
of semi rings given by 
$\sum_i m_i(A_i,I_i) \to {\tilde \oplus}_i{\tilde \oplus}^{m_i} (A_i,I_i)$.
 Moreover 
the addition 
satisfies the cancellation law ({\em i.e.}, $a+c = b+c$
implies $a=b$ hence   (Theorem~20.8 of [W]) the semi ring  $\N[\sC_k]$  
embeds into the ring  
$\Z[\sC_k] = \{a-b\mid a, b\in \N[\sC_k|\}$.
In particular we have the embedding (respecting the  binary operations)
$$(\sC_k, \tensor) \hookrightarrow (\N[\sC_k], \tensor, {\tilde \oplus}) \simeq
(\frac{\mbox{\{graded pairs}\}}{\equiv}, \tensor, {\tilde \oplus})
\hookrightarrow (\Z[\sC_k], \tensor, {\tilde \oplus}),$$
where the first 
embedding is given by $(R,I) \to 1\cdot (R,I)$. 
Here the SG pairs $(k, (0))$ and $(0,(0))$ are respectively  the multiplicative  
and  the additive identity  of the ring $\Z[\sC_k]$. 

In particular, if  $\phi:\sC_k\longto (S, +, \cdot)$ is a map where 
$(S, +, \cdot)$ is a ring 
and where  
$$\phi((R,I) \tensor (R', I')) = \phi(R,I) \cdot \phi(R', I')\quad\mbox{and}\quad
\phi((R,I) {\tilde \oplus} (R', I')) = \phi(R,I) + \phi(R', I'),$$ 
then $\phi$ extends 
uniquely to a map of rings
$\phi:(\Z[\sC_k], \tensor, {\tilde \oplus})\longto (S, +, \cdot)$.

\begin{thm}\label{t1}The map $\Pi: \Z[\sC_k] \longto H(\C)[X]$ given by 
$$(R, I) \to F_{R^0, IR^0} + F_{R^1, IR^1}X + \cdots + F_{R^d, IR^d}X^d$$
 is a ring homomorphim, where $d = \dim~R$.
\end{thm}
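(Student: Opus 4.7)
The proof strategy is to verify the two axioms of a semi-ring homomorphism on the generating monoid $\sC_k$ and then apply the universal property of $\Z[\sC_k]$ recalled in the paragraph immediately preceding the theorem. Multiplicativity of $\Pi$ with respect to $\tensor$ on SG pairs is exactly Proposition~\ref{p1}. Hence the main task is to verify additivity: for any two SG pairs $(R,I)$ and $(S,J)$,
\[
\Pi\bigl((R,I)\,{\tilde \oplus}\,(S,J)\bigr) \;=\; \Pi(R,I) + \Pi(S,J)
\]
in $H(\C)[X]$.

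The first step is to analyse the prime structure of $R \oplus S$. Minimal primes of a product of rings are precisely the pullbacks of minimal primes of the factors, and likewise for their primary components, so for each $i$ one has
\[
(R \oplus S)^i \;=\; R^i \oplus S^i,
\]
with the convention that $R^i = 0$ when $R$ has no $i$-dimensional minimal prime, and analogously for $S^i$. Similarly $(I \oplus J)(R\oplus S)^i = I R^i \oplus J S^i$, so
\[
\frac{(R\oplus S)^i}{(I \oplus J)^{[q]}(R\oplus S)^i} \;=\; \frac{R^i}{I^{[q]}R^i} \oplus \frac{S^i}{J^{[q]}S^i}.
\]
Since all three pieces here have the same dimension $i$, the normalisation factor $1/q^{i-1}$ in the definition~(\ref{step}) of $f_n$ is consistent across them; combined with additivity of $k$-dimension over direct sums, this gives the pointwise identity
\[
f_n\bigl((R \oplus S)^i,\,(I \oplus J)(R\oplus S)^i\bigr) \;=\; f_n(R^i, IR^i) + f_n(S^i, JS^i).
\]
Applying the (linear) Fourier transform and passing to the pointwise limit via Lemma~\ref{l1} yields $F_{(R\oplus S)^i, (I \oplus J)(R\oplus S)^i} = F_{R^i, IR^i} + F_{S^i, JS^i}$. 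Summing with weight $X^i$ gives the required additivity.

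With additivity in hand, the formula $(A, I) \mapsto \sum_{i=0}^{\dim A} F_{A^i, IA^i}\, X^i$ extends canonically to $\N[\sC_k]$ and is additive on ${\tilde \oplus}$ by construction; well-definedness modulo isomorphism is guaranteed by Lemma~\ref{l0}. Multiplicativity on graded pairs then follows by combining Proposition~\ref{p1} with the distributivity of $\tensor$ over ${\tilde \oplus}$ (together with the additivity already proved). Finally, the cancellation property of the semi-ring $\N[\sC_k]$, via the embedding into $\Z[\sC_k]$ recalled before the theorem, delivers the unique extension to a ring homomorphism $\Pi:\Z[\sC_k]\longto H(\C)[X]$. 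No serious obstacle is expected: the only bookkeeping subtlety is the convention that $R^i=0$ whenever $R$ lacks an $i$-dimensional component, which we handle uniformly by noting that $f_n(0,0)=0$ and hence $F_{0,0}=0$.
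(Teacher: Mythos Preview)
Your proposal is correct and follows essentially the same approach as the paper: both reduce the theorem to checking the additivity identity $F_{(R\oplus S)^i,(I\oplus J)(R\oplus S)^i}=F_{R^i,IR^i}+F_{S^i,JS^i}$ via the description of minimal primes of $R\oplus S$, and then invoke the universal extension from $\sC_k$ to $\Z[\sC_k]$ discussed just before the theorem (with multiplicativity supplied by Proposition~\ref{p1}). The only cosmetic difference is that you verify the additivity identity by a direct computation with $f_n$ and Lemma~\ref{l1}, whereas the paper appeals to the pre-established additivity formula $F_{R,I}=\sum_{p\in\Lambda}\lambda(R_p)F_{R/p,(I+p)/p}$.
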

\begin{proof}We only need to check that, if $(R, I)$ and $(S, J)$ are two SG 
pairs then 
\begin{equation}\label{add}F_{(R\oplus S)^i, (I\oplus J)^i} = 
F_{R^i, IR^i}+ F_{S^i, JS^i}.\end{equation}
It is easy to check that 
the set $\{P\in \Spec (R\oplus S)\mid \dim(R\oplus S/P) = i\}$ is  equal to 
$$\{p\oplus S\mid p\in \Spec R\mid \dim(R/p) = i\}\cup
\{R\oplus q\mid q\in \Spec S\mid \dim(S/q) = i\}.$$
Moreover $(R\oplus S)_P = R_p$, if $P=p\oplus S$.
Now (\ref{add}) follows from the additivity property of $\Phi$.
\end{proof}

\begin{cor}\label{c1}The map $\Pi_e:\Z[\sC_k] \longto \R[X]$
 given by 
 $$(R,I) \to e_{HK}(R^0, IR^0)+ e_{HK}(R^1, IR^1)X+\cdots + 
e_{HK}(R^d, IR^d)X^d$$
is a ring homomorphism.

Also the map $\Pi_p:\sC_k\longto \R$ given by 
$(R,I) \longto e_{HK}(R,I)$ is multiplicative, {\em i.e.},
$e_{HK}(R\tensor S, I\tensor J) = e_{HK}(R, I)e_{HK}(S, J)$.
 \end{cor}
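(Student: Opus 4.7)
My plan is to obtain the corollary as an immediate consequence of Theorem~\ref{t1} by composing with the evaluation homomorphism at $z=0$. Let $e_0:H(\C)[X]\longto \C[X]$ denote the $\C$-algebra homomorphism sending $\sum_i F_i X^i\mapsto \sum_i F_i(0)X^i$. Since pointwise evaluation at any point of $\C$ is a ring homomorphism on $H(\C)$, the extension $e_0$ to the polynomial ring is clearly a ring homomorphism. Hence $e_0\circ\Pi:\Z[\sC_k]\longto \C[X]$ is a ring homomorphism.

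Next I would identify the composition $e_0\circ\Pi$ with $\Pi_e$. For any SG pair $(A,L)$ the paper already records that $F_{A,L}(0)=e_{HK}(A,L)$: indeed, in dimension $0$ the function $F_{A,L}$ is the constant $\ell(A)=e_{HK}(A,L)$ by Case~(1) of Lemma~\ref{l1}, while in dimension $\geq 1$ one has $F_{A,L}={\widehat{f_{A,L}}}$, so $F_{A,L}(0)=\int_\R f_{A,L}(t)\,dt = e_{HK}(A,L)$ by the definition of the Fourier transform and the integral formula for the HK multiplicity recalled in the introduction. Applying this component by component to $\Pi(R,I)=\sum_i F_{R^i,IR^i}X^i$ shows $(e_0\circ\Pi)(R,I)=\sum_i e_{HK}(R^i,IR^i)X^i=\Pi_e(R,I)$. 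Since the coefficients are real, the image lies in $\R[X]$, proving the first assertion.

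For the multiplicativity of $\Pi_p$ on the monoid $\sC_k$, I would simply read off the top-degree coefficient of the product polynomial. Given $(R,I),(S,J)\in\sC_k$ with $\dim R=d_1,\,\dim S=d_2$, the polynomial $\Pi_e(R,I)$ has degree $d_1$ with leading coefficient $e_{HK}(R,I)$, and similarly for $(S,J)$; note also that $\dim(R\tensor_k S)=d_1+d_2$. The ring-homomorphism property just established gives
\[
\Pi_e\bigl((R,I)\tensor(S,J)\bigr)=\Pi_e(R,I)\cdot \Pi_e(S,J)
\]
in $\R[X]$, and comparing coefficients of $X^{d_1+d_2}$ yields
\[
e_{HK}(R\tensor_k S,\,I\tensor_k J)=e_{HK}(R,I)\cdot e_{HK}(S,J).
\]

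The main obstacle is conceptually only the identification $F_{A,L}(0)=e_{HK}(A,L)$ in the zero-dimensional case, which must be handled separately because there $F_{A,L}$ is not the Fourier transform of the (identically zero) limit function $f_{A,L}$; this is however already covered by Case~(1) of Lemma~\ref{l1}. Everything else is formal, and the second assertion about $\Pi_p$ could alternatively be proved directly from Lemma~\ref{l2} by evaluating $F_{(R,I)\tensor (S,J)}=F_{R,I}\cdot F_{S,J}$ at $z=0$, but deducing it from the first part keeps the argument uniform.
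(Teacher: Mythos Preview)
Your proof is correct and follows essentially the same approach as the paper: compose $\Pi$ with the coefficientwise evaluation-at-zero map $H(\C)[X]\to\C[X]$ to get $\Pi_e$, and then take the leading coefficient to obtain the multiplicativity of $\Pi_p$. Your additional justification of $F_{A,L}(0)=e_{HK}(A,L)$ (including the zero-dimensional case via Lemma~\ref{l1}) is more explicit than the paper's, which simply records this identity in the Notations preceding the proof.
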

\begin{proof}Consider the evaluation map 
  $ev:H(\C)[X]\longto \C[X]$
given by 
$\sum_i f_iX^i\to \sum_if_i(0)X^i$. Since this is a ring homomorphism  the map 
$\Pi_e := ev\circ\Pi$  is a ring homomorphism.

Moreover the map  $pr :\C[X]\longto \C$ given by 
$a_0+a_1X+\cdots +a_mX^m\to a_m$ is multiplicative and hence so is
 $\Pi_p = pr\circ \Pi\mid_{\sC_k}$. Now the corollary follows as
$e_{HK}(R, I)$ by definition is $e_{HK}(R^d, IR^d)$, where $d$ is dimension of $R$.
\end{proof}

\begin{rmk}\label{d2}
For two SG pairs $(R, I)$ and $(S, J)\in \sC_k$
we define $(R,I)\bigcup (S,J)\in \sC_k$ as a standard graded ring given by 
$(R\bigcup S)_0 = k$ and $(R\bigcup S)_n = R_n\oplus S_n$.

Let  $\sC_k^{1} = \{(R, I)\in \sC_k\mid \dim~R\geq 1\} $
then $\sC_k^{1}$ is  a $\Z[\sC_k]$-module and 
it is easy to check that for  $(R, I)$  and $(S, J)$ in $\sC{1}$
 $$\Pi((R, I){\tilde \oplus}(S, J))  = \Pi((R, I)\bigcup(S, J))$$ 

and 
 the map $\Pi: \Z[\sC_k] \longto H(\C)[X]$
factors through the quotient ring 
$$\Z[\sC_k]/
\langle \{(R, I){\tilde \oplus}(S, J) - (R, I)\bigcup(S, J)
\mid (R, I),~~(S, J)\in \sC_k{1}\}\rangle.$$
Hence 
$\Pi\mid_{\sC_k^1}:\sC_k^{1}\longrightarrow H(\C)[X]$ given by  
$$(R, I) \to {\widehat f}_{R^1, IR^1}X + \cdots + 
{\widehat f}_{R^d, IR^d}X^d$$
is  a $\Z[\sC_k]$-linear map and therefore 
$(e\circ \Pi)\mid_{\sC_k^1}:\sC_k^1 \longrightarrow \R[X]$ is a $\Z[\sC_k]$-linear
 which sends
$$(R,I) \to e_{HK}(R^1, IR^1)X+\cdots +
e_{HK}(R^d, IR^d)X^d.$$
\end{rmk}

\begin{notations} Let $(R,I)$ be a SG pair of dimension $d$. 
It is easy to check that  $e_{HK}(R,I) = {F_{R,I}}(0)$.
We denote 
$$\alpha(R,I) =  \max\{x\mid f_{R,I}(x)\neq 0\}\;\;\mbox{where}\;\;
f_{R, I} = \lim_{n\to \infty}f_n(R,I).$$
It follows that  $\dim~R = 0$ if and only if  $\alpha(R, I) = 0$.

We note  that for the Fourier transform
${\widehat f_{R, I}}$ of $f_{R, I}$, 
we have  $F_{R, I} =  {\widehat f_{R,I}}$ if  $d\geq 1$ and 
 $F_{R, I}  \neq {\widehat f}_{R,I}$, if $\dim R =0$.
\end{notations}

\vspace{5pt}

\noindent{\underline{Proof of Theorem}~\ref{t2}.}\quad 
If $(R, I)$ is a $0$ dimensional pair then 
for all $z$, 
$F_{R, I}(z) = \ell(R) = e_{HK}(R)$. Hence 
the theorem holds in this case.

Henceforth we assume $\dim~R\geq 1$. 
We denote $\alpha(R, I)$ by $\alpha$. 
Now $F_{R, I} =  {\widehat f}_{R, I}$, where $f_{R, I}\in C_c^{ae}(\R)$ is 
 the HK density function of 
$(R, I)$.

Let $A\in \R_{+}$ such that 
 ${\widehat f}_{R,I}\in PW_A$. Then there is 
a real valued function $g\in L^2[-A, A]$ such that 
$${\widehat f}_{R,I}(z) = \int_{-A}^Ag(t)e^{itz}dt =:{\widehat g}(z),
\quad\mbox{for all}\quad z\in \C.$$
However the mapping $L^2(\R)\longto L^2(\R)$ given by 
$f\to {\widehat f}$
 is an isomorphism of Hilbert spaces.
Hence $${\widehat f_{R,I}} = {\widehat g}\implies 
||{\widehat f_{R,I}} - {\widehat g}||_2 = 0\implies 
||f_{R,I}-g||_2 = \int_{\infty}^\infty |f(t)-g(t)|^2dt = 0$$

Hence $f_{R,I} = g$ a.e.. But then 
$$e_{HK}(R,I) = {\widehat f_{R,I}}(0) = 
\int_{-A}^Ag(t)dt = \int_{-A}^Af_{R,I}(t)dt = 
\int_{0}^Af_{R, I}(t)dt.$$
If $A <\alpha$ then we have $\int_A^\alpha f_{R,I}(t)dt = 0$, which is 
 a contradiction as, by Lemma~\ref{l11} and Theorem~1.1 of [T], the function 
$f_{R, I}$ is a nonnegative function which is stricly positive
 in a nbhd of $\alpha$.

For $z = x+iy\in \C$, 
$$|{\widehat f}_{R,I}(z)|\leq \int_0^\alpha |f_{R,I}(t)e^{itx -ty}|dt
\leq e^{\alpha |z|}\int_0^\alpha f_{R,I}(t)dt = e_{HK}(R,I)e^{\alpha |z|}.$$ 
Let  $C$ and $M$ be constants such that 
$|{\widehat f}_{R,I}(z)|\leq Ce^{M|z|}$.
Since ${\widehat f}_{R, I}\mid_{\R}\in L^2(\R)$, by Paley-Wiener Theorem we have 
${\widehat f_{R,I}}\in PW_M$ and hence $\alpha \leq M$.
Moreover  $e_{HK}(R,I) = {\widehat f_{R,I}}(0)
\leq C$. 
 $\Box$

\vspace{5pt}

\noindent{\underline{Proof of Corollary}~\ref{c2}.}\quad 
By  Theorem~4.9 of [TrW] and Theorem~C of [T2], we have $\alpha(R, I) = c^I({\bf m})$.
Hence the corollary follows from Theorem~\ref{t2}.$\Box$

\end{document}

\bibitem[BMS1]{BMS1}{Blickle, M., Musta\c{t}\u{a}, M., Smith, K.},
 {\it F-thresholds of hypersurfaces},
 Trans. Amer. Math. Soc. 361 (2009), no. 12, 6549-6565.
\bibitem[BMS2]{BMS2}{Blickle, M., Musta\c{t}\u{a}, M., Smith, K.},{\it
Discreteness and rationality of F-thresholds},
Michigan Math. J., 57 (2008), pp. 43-61 (Special volume in honor of
Melvin Hochster).

\bibitem[BS]{BS}{Bhargav, B., Singh, A.}, {\it The $F$-pure 
threshold of a Calabi-Yau hypersurface}, Math. Ann. (2015) 362, 551-567.

\bibitem[BSTZ]{BSTZ}{Blickle, M., Schwede, K., Takagi, S., Zhang, W.},
{\it Discreteness and rationality of F-jumping numbers on singular varieties}.
Math. Ann. 347 (2010), no. 4, 917-949.
 
\bibitem[CHSW]{CHSW}{Canton, E., Hernández, D., Schwede, K., Witt,E.}, {\it On 
the behavior of singularities at the F-pure threshold}, 
Illinois J. Math. 60 (2016), no. 3-4, 669-685. 

\bibitem[G]{G}{Gieseker, D.}, {\it Stable vector bundles and the Frobenius 
morphism}, Ann. Sci. École Norm. Sup. (4) 6 (1973), 95-101. 

\bibitem[GrS]{GrS}{Graf, P.,  Schwede, K.}, {\it Discreteness of $F$-jumping 
numbers at isolated non-$\Q$-Gorenstein points}. Proc. Amer. Math. Soc. 146 
(2018), no. 2, 473-487.

\bibitem[HY]{HY}{Hara, N., Yoshida, K.},
{\it A generalization of tight closure and multiplier ideals} Trans. Am. Math. Soc.
355, 3143-3174 (2003).

\bibitem[H]{H}{Hartshorne, R.}, {\it Algebraic geoemetry}, Springer-Verlag NY (1977).

\bibitem[HMNb]{HMNb}{Huneke, C., Montanera, J.A.,  N\'{u}\~{n}ez-Betancourt, L.},
{\it D-modules, Bernstein–Sato polynomials and F-invariants of direct summands},
Advances in Mathematics
Volume 321, 1 December 2017, Pages 298-325.

\bibitem[KSSZ]{KSSZ}{Katzman, M., Schewde, K., Singh, A., Zhang, W.},
{\it  Rings of Frobenius 
operators}. Math. Proc. Cambridge Philos. Soc. 157 (2014), no. 1, 151–167. 

\bibitem[KLZ]{KLZ}{Katzman, M., Lyubeznik, G., Zhang, W.},
{\it  On the discreteness and rationality of F-jumping coefficients},
J. Algebra 322 (2009), no. 9, 3238-3247.

\bibitem[Ma]{Ma}{Maruyama, M.}, {\it Openness of a family of torsion
 free sheaves}, J. Math. Kyoto Univ. 16-3 (1976), 627-637.

\bibitem[Mu1]{Mu1}{Mumford, D.}, {\it Abelian varieties}, Tata Institute 
of Fundamental Research, Studies in Mathematics, No~5, corrected reprint, 
Hindustan Book Agency, New Delhi (2012).

\bibitem[Mu2]{Mu2}{Mumford, D.},{\it Lectures on curves on an Algebraic Surface},
Annals of Math. studies 59, Princeton University Press, Princeton, NJ (1966).

\bibitem[MTW]{MTW}{Musta\c{t}\u{a}, M., Takagi, S., Watanabe, K.I.}, 
{\it F-thresholds and Bernstein-Sato polynomials}, European
congress of mathematics, 341-364, Eur. Math. Soc., Zurich, 2005.

\bibitem[ST]{ST}{Schwede, K., Tucker, T.}, {\it Test ideals of non-principal 
ideals: Computations, Jumping Numbers, Alterations and Division Theorems}, 
J. Math., Pures Appl. (9) 102 (2014), no. 5, 891-929.

\bibitem[TaW]{TaW}{Takagi, S., Watanabe, K.I.}, {\it On F-pure thresholds}, 
J. Algebra 282 (2004), 278-297.

\bibitem[T]{T}{Trivedi, V.}, {\it Hilbert-Kunz multiplicity and
reduction mod $p$}, Nagoya Math. Journal !85 (2007), 123-141.

\bibitem[TrW]{TrW}{Trivedi, V., Watanabe, K.}, {\it Hilbert-Kunz density functions and 
$F$-threshold}, arXiv:1808.04093v1.

\end{document}
 ------------------------------------------------------------------------------
D
-modules, the proof in this paper uses Frobenius actions on
the injective hull of the residue field of the excellent regular local ring.